\newtheorem{thm}{Theorem}[section]
\newtheorem{cor}[thm]{Corollary}
\newtheorem{lem}[thm]{Lemma}
\newtheorem{prop}[thm]{Proposition}
\theoremstyle{definition}
\newtheorem{defn}[thm]{Definition}
\theoremstyle{remark}
\newtheorem{rem}[thm]{Remark}
\newtheorem{exam}[thm]{Example}
\numberwithin{equation}{section}
\def\ca{\mathcal{ A}}
\def\cc{\mathcal{ C}}
\def\ce{\mathcal{ E}}
\def\ch{\mathcal{ H}}
\def\cj{\mathcal{ J}}
\def\ck{\mathcal{K}}
\def\cm{\mathcal{ M}}
\def\ga{{\frak A}}
\def\gb{{\frak B}}
\def\gc{{\frak C}}
\def\bc{{\mathbb C}}
\def\bn{{\mathbb N}}
\def\bp{{\mathbb P}}
\def\bz{{\mathbb Z}}
\def\a{\alpha}
\def\b{\beta}
\def\d{\delta}
\def\l{\lambda} \def\LL{\Lambda}
\def\m{\mu}
\def\p{\psi}
\def\r{\rho}
\def\s{\sigma} 
\def\t{\tau}
\def\f{\varphi} 
\def\v{\phi}
\def\w{\omega} \def\O{\Omega}
\def\tr{\mathop{\rm Tr}}
\def\id{{\bf 1}\!\!{\rm I}}
\def\1b{{\mathbf{1}}}
\def\L{\Lambda}
\def\o{\otimes}
\def\a{\alpha}
\begin{document}

\title[Open Quantum Random Walks \& Quantum Markov Chains]{Open Quantum Random Walks, Quantum Markov Chains and Recurrence}

\author{Ameur Dhahri}
\address{Ameur Dhahri\\
 Department of Mathematics, Chungbuk National University\\
Chungdae-ro, Seowon-gu, Cheongju, Chungbuk 362-763, Korea} \email{{\tt ameur@chungbuk.ac.kr}}

\author{Farrukh Mukhamedov}
\address{Farrukh Mukhamedov\\
Department of Mathematical Sciences, College of Science, The United
Arab Emirates University, P.O. Box 15551, Al Ain, Abu Dhabi, UAE}
\email{{\tt farrukh.m@uaeu.ac.ae}}

\begin{abstract}
In the present paper, we construct QMCs associated with Open Quantum
Random Walks such that the transition operator of the chain is
defined by OQRW and the restriction of QMC to the commutative
subalgebra coincides with the distribution $\bp_\r$ of OQRW. This
sheds new light on some properties of the measure $\bp_\r$. As an
example, we simply mention that the measure can be considered as a
distribution of some functions of certain Markov process.
Furthermore,  we study several properties of QMC and associated
measure. A new notion of $\f$-recurrence of QMC is studied, and it
is established relations between the defined recurrence and the
existing ones.

 \vskip 0.3cm \noindent {\it Mathematics Subject
Classification}: 46L35, 46L55, 46A37.\\
{\it Key words}: Open quantum random walk; quantum Markov chain;
recurrence; finitely correlated state.
\end{abstract}

\maketitle

\section{Introduction}

The study of asymptotic behavior of trace-preserving completely
positive maps, also known as quantum channels, is a fundamental
topic in quantum information theory, see for instance
\cite{burgarth,burgarth2,lardi,petulante,petulante2,novotny,novotny2}.
More recently, an important class of quantum channels, namely Open
Quantum Random Walks (OQRWs) has been introduced by S. Attal et al.
\cite{attal} and its long term behavior studied
\cite{attal2,konno,cfrr,pellegrini}. These extensions of Markov
chains, where the process retains some amount of memory which is
encoded by a quantum state.

Let us recall some necessary information about OQRW.
 Let $\mathcal{K}$ denote a separable Hilbert space and let
$\{|i\rangle\}_{i\in \LL}$ be its orthonormal basis indexed by the
vertices of some graph $\LL$ (here the set $\LL$ of vertices might
be finite or countable). Let $\mathcal{H}$ be another Hilbert space,
which will describe the degrees of freedom given at each point of
$\LL$. Then we will consider the space
$\mathcal{H}\otimes\mathcal{K}$. For each pair $i,j$ one associates
a bounded linear operator $B_{j}^i$ on $\mathcal{H}$. This operator
describes the effect of passing from $|j\rangle$ to $|i\rangle$. We
will assume that for each $j$, one has
\begin{equation}\label{Bij}
\sum_i B_{j}^{i*}B_{j}^i=\id,
\end{equation} where, if infinite,
such series is strongly convergent. This constraint means: the sum
of all the effects leaving site $j$ is $\id$. The operators $B^i_j$
act on $\mathcal{H}$ only, we dilate them as operators on
$\mathcal{H}\otimes\mathcal{K}$ by putting
$$
M^i_j=B^i_j\otimes \vert i\rangle\langle j\vert\,.
$$
The operator $M^i_j$ encodes exactly the idea that while passing
from $\vert j\rangle$ to $\vert i\rangle$ on the lattice, the effect
is the operator $B^i_j$ on $\mathcal{H}$.

According to \cite{attal} one has
\begin{equation}\label{Mij}
\sum_{i,j} {M^i_j}^* M^i_j=\id.
\end{equation}

Therefore, the operators $(M^i_j)_{i,j}$ define a completely
positive mapping
\begin{equation}\label{MM}
\cm(\r)=\sum_i\sum_j M^i_j\,\r\, {M^i_j}^*
\end{equation}
on $\ch\otimes\ck$.

In what follows, we consider density matrices on
$\mathcal{H}\otimes\mathcal{K}$ which take the form
\begin{equation}\label{rr}
\rho=\sum_i\rho_i\otimes |i\rangle\langle i|,
\end{equation}
 assuming that $\sum_i\tr(\rho_i)=1$.

For a given initial state of such form, the \textit{Open Quantum
Random Walk (OQRW)} is defined by the mapping $\cm$, which has the
following form
\begin{equation}\label{MM1}
\cm(\rho)=\sum_i\Big(\sum_j B_{j}^i\rho_j B_{j}^{i*}\Big)\otimes
|i\rangle\langle i|.
\end{equation}

By means of the map $\cm$ one defines a family of classical random
process on $\O=\L^\bz_+$. Namely, for any density operator $\r$ on
$\mathcal{H}\otimes\mathcal{K}$ (see \eqref{rr}) the probability
distribution is defined by
$$
\bp_\r(i_0,i_1,\dots,i_n)=\tr(B^{i_n}_{i_{n-1}}\cdots
B^{i_2}_{i_{1}}B^{i_1}_{i_{0}}\rho_{i_0}B^{i_1*}_{i_{0}}B^{i_2*}_{i_{1}}\cdots
B^{i_n*}_{i_{n-1}}).
$$
We point out that this distribution is not a Markov measure
\cite{BBP}.

On the other hand, it is well-known \cite{norris} that to each
classical random walk one can associate certain Markov chain and
some properties of the walk can be explored by the constructed
chain. Therefore, it is natural to construct Quantum Markov chain
associated with OQRW and investigate its properties.

More precisely, the following arises problem: find a quantum Markov
chain (QMC)\footnote{ We note that a Quantum Markov Chain is a
quantum generalization of a Classical Markov Chain where the state
space is a Hilbert space, and the transition probability matrix of a
Markov chain is replaced by a transition amplitude matrix, which
describes the mathematical formalism of the discrete time evolution
of open quantum systems, see
\cite{[AcFr80]}-\cite{accardi2},\cite{fannes2,FM} for more details.}
(or finitely correlated state (FCS)\cite{fannes2}) $\f$ on the
algebra $\ca=\o_{i\in\bz_+}\ca_i$, where $\ca_i$ is isomorphic to
$B(\ch)\o B(\ck)$, $i\in\bz_+$, such that the transition operator
$P$ (see section 3 for details) equal to the mapping
$\cm^*$\footnote{The dual of $\cm$ is defined by the equality
$\tr(\cm(\r)x)=\tr(\r\cm^*(x))$ for all density operators $\rho$ and
observables $x$.} and the restriction of $\f$ to the commutative
subalgebra of $\ca$ coincides with the distribution $\bp_\r$, i.e.
\begin{equation}\label{I1}
\f\big((\id\o|i_0><i_0|)\o\cdots\o(\id\o|i_n><i_n|)\big)=\bp_\r(i_0,i_1,\dots,i_n).
\end{equation}
We note that one can find a state with property \eqref{I1} very
easily, but the question is would that kind of state be difficult to
distinguish as a QMC (or FCS)? Finding such a QMC will allow to
interpret the distribution $\bp_\r$ as a QMC, and to study further
properties of $\bp_p$.

The main aim of this paper is to solve the initial problem and lead
a further investigation in a few of the consequences of the problem.
In what follows, we are going to work within QMC scheme
\cite{Ac,[AcFr80]}, and provide a concrete construction of QMC with
the desired property. We stress that to construct such a state we
define a notion of transpose of QMC (which is impossible to define
with FCS) and using this, one defines QMC associated with $\cm$. The
solution of the problem sheds new light on some properties of the
measure $\bp_\r$. For example, the measure can be considered as a
distribution of some functions of certain Markov process
\cite{nacht,fannes3}. This together with the results of
\cite{fannes3} will allow to compute certain physical quantities
(e.g. entropy) of $\bp_\r$ as well as the further development of the
study of repeated quantum measurements \cite{BJPP} via finitely
correlated states \cite{fannes2}.

 R. Carbone and Y. Pautrat \cite{carbone,carbone2} have recently studied irreducibility and periodicity aspects of the mapping $\cm$
 and as expected, the dynamical behavior of an OQRW is in general
quite different from what is obtained with the usual (closed)
quantum random walk \cite{portugal,salvador}. Simultaneously, an
OQRW is quite different in general from what is obtained with the
usual (closed) quantum random walk \cite{portugal,salvador}. In most
of the existing papers, as a whole, the distribution is not
well-studied therefore in the present paper, we will discover and go
in depth of certain markovianity of the distribution along with
establishing its ergodic properties with $\cm$'s ergodic properties.

Furthermore, the study of the notion of recurrence motivated a large
number of papers extending it in different directions:
see\cite{bourg} for the notion of monitored recurrence for
discrete-time quantum processes; see \cite{werner} for the
recurrence of discrete time unitary evolutions, see \cite{accardi1}
for the recurrence of the quantum Markov chains; see \cite{fagnola}
for the recurrence of the quantum Markovian semigroups. In
\cite{accardi1,accardi2} it was defined a notion of recurrence for
QMC which was based on the transition expectation and initial
projection. When we look at QMC it depends on an initial state and a
transition expectation, therefore, we define the recurrence within
QMC scheme. It turns out that the defined recurrence is connected to
the recurrence of OQRW \cite{BBP,CGL}.  However, the notions of
recurrence elaborated in \cite{BBP,CGL} are purely classical, i.e.
they depend on a classical probability distribution $\bp_\r$ (which
is not necessary to be Markov one, therefore, it has appeared
different phenomena than Markov one) and they are not connected to
the noncommutative observables. In the present paper, we propose to
study $\f^t_\r$-recurrence which could treat more general events in
the non-commutative setting. Namely, one can study
$\f^t_\r$-recurrence of projections rather than
$\id\o|k\rangle\langle k|$. The recurrence of these kinds of
projections can not treated by means of ones investigated in
\cite{BBP,cfrr}. Moreover, the present approach can be also applied
to the case of finitely correlated states (see Remark \ref{FCS1}).


The paper is organized as follows. In section 2, we introduce basic
concepts related to the Open Quantum Random Walks and construct the
corresponding measure which we stress that this measure a'priori is
not a Markov one. As we move along to section 3, we recall a notion
of Quantum Markov Chain (QMC) and its transpose while providing a
construction of QMC. By means of the construction of section 3, we
construct QMCs associated with Open Quantum Random Walks in section
4. It turns out that the transpose of the constructed QMC is
naturally corresponding to the given OQRW. Additionally, in this
section, we solve the posted problem and thoroughly review several
properties of QMC and associated measure. Besides, it is also
provided a construction of finitely correlated states with given
marginal distributions on some commutative algebra. This opens new
perspectives with the results of \cite{BJPP}. In section 5, a new
notion of $\f$-recurrence of QMC is analyzed consequently showing
that the defined recurrence is related to the recurrence
investigated in \cite{BBP,CGL}. Section 6 is devoted to interesting
examples of OQRW for which relations between $\f$-recurrence and the
recurrence in the sense of \cite{CGL} are investigate. Lastly, in
sections 7 and 8 we provide the proofs of the formulated results in
the sections 4 and 5, respectively.

\section{Open Quantum Random Walks}

In this section, we recall basic setup to define the distribution
associated with Open Quantum Random Walks (OQRW).

As before, we consider  the mapping $\cm$ defined by \eqref{MM1}.
Hence, a measurement of the position in $\ck$ would give that each
site $i$ is occupied with probability
\begin{equation*}\label{E:pi}
\sum_j \tr\left(B^i_j \r_j {B^i_j}^*\right)\,.
\end{equation*}

If the measurement is performed after two steps, i.e.
$$
\cm^2(\r)=\sum_{i}\sum_j\sum_k B^i_j B^j_k\, \r_k\,
{B^j_k}^*{B^i_j}^*\otimes \vert i\rangle\langle i\vert\,.
$$
Hence measuring the position, we get the site $\vert i\rangle$ with
probability
\begin{equation*}\label{E:L2}
\sum_j\sum_k \tr\left(B^i_jB^j_k \,\r_k
\,{B^j_k}^*{B^i_j}^*\right)\,.
\end{equation*}

The random walk which is described in this way by the iteration of the
completely positive map $\cm$ is not a classical random walk, it is
a quantum random walk.

The indicated distributions define a measure. Let us construct this
measure.

Let us denote $\O_{\bz_+}=\L^{\bz_+}$, $\O_\bz=\L^{\bz}$, here
$\bz_+$ denotes the set of all non negative integers. A subset of
$\O_{\bz_+}$ (resp. $\O_\bz$) given by
$$
A^{[l,m]}(i_l,i_{l+1},\dots,i_m)=\{\w\in\O_{\bz_+}:\w_l=i_l,\dots,\w_m=i_m\}.
$$
is called \textit{thin cylindrical set}, where $i_k\in\L$,
$k\in\bz_+$. By $\frak{F}$ we denote the $\s$-algebra generated by
thin cylindrical sets.

Since the finite disjoint unions of thin cylinders form an algebra
which generates $\frak{F}$, therefore a measure $\mu$ on $\frak{F}$
is uniquely determined by the values:
$$
\mu_n(A^{[l,n]}(i_l,i_{l+1},\dots,i_n)).
$$
which should satisfy the compatibility conditions, i.e.
\begin{equation}\label{comp1}
\sum_{j\in\L}\mu_{n+1}(A^{[0,n+1]}(i_0,i_{1},\dots,i_n,j))=\mu_n(A^{[0,n]}(i_0,i_{1},\dots,i_n))
\end{equation}
The Kolmogorov's Theorem ensures the existence of the measure $\mu$
on $(\O_{\bz_+},\frak{F})$.

Now for a given $\cm$ (see \eqref{MM1}) and a fixed $\rho$ (see \eqref{rr}), for every $n\in\bn$, we define a measure
$\bp_{\r,n}$ on $\O_n:=\L^{[0,n]}$ as the distribution of the OQRW,
i.e.
\begin{equation}\label{marginal}
\bp_{\r,n}(A^{[0,n]}(i_0,i_{1},\dots,i_n))=\tr(B^{i_n}_{i_{n-1}}\cdots
B^{i_2}_{i_{1}}B^{i_1}_{i_{0}}\rho_{i_0}B^{i_1*}_{i_{0}}B^{i_2*}_{i_{1}}\cdots
B^{i_n*}_{i_{n-1}}).
\end{equation}
The defined measures satisfy the compatibility condition. Indeed,
due to \eqref{Bij} we have
\begin{eqnarray*}
\sum_{j\in\L}\bp_{\r,{n+1}}(A^{[0,n+1]}(i_0,i_{1},\dots,i_n,j))&=&\sum_{j\in\L}\tr(B^{j}_{i_{n}}\cdots
B^{i_2}_{i_{1}}B^{i_1}_{i_{0}}\rho_{i_0}B^{i_1*}_{i_{0}}B^{i_2*}_{i_{1}}\cdots
B^{j*}_{i_{n}})\\[2mm]
&=&\tr\bigg(\bigg(\sum_{j\in\L}\tr(B^{j*}_{i_{n}}B^{j}_{i_{n}})\bigg)B^{i_n}_{i_{n-1}}\cdots
B^{i_2}_{i_{1}}B^{i_1}_{i_{0}}\rho_{i_0}B^{i_1*}_{i_{0}}B^{i_2*}_{i_{1}}\cdots
B^{i_n*}_{i_{n-1}}\bigg)\\[2mm]
&=&\bp_{\r,n}(A^{[0,n]}(i_0,i_{1},\dots,i_n)).
\end{eqnarray*}
Hence, we have the following result.

\begin{prop}\label{MM1} For given OQRW $\cm$ and an initial density
operator $\rho$ there is a unique measure $\bp_\rho$ on
$(\O_{\bz_+},\frak{F})$ with marginal distributions given by
\eqref{marginal}.
\end{prop}

It turns out that the measure $\bp_\rho$ can be extended to
$(\O_{\bz},\frak{F})$ under some conditions. Namely, we have the
following

\begin{prop}\label{MM2} Let $\bp_\rho$ be a measure defined on
$(\O_{\bz_+},\frak{F})$ associated with OQRW $\cm$ and an initial
density operator $\rho$. If $\rho=\sum\limits_i \rho_i\otimes
|i\rangle\langle i|$ is an invariant density operator w.r.t. $\cm$,
then the measure $\bp_\rho$ can be extended to
$(\O_{\bz},\frak{F})$.
\end{prop}

\begin{proof} The invariance of $\rho=\sum\limits_i \rho_i\otimes
|i\rangle\langle i|$ w.r.t. $\cm$ implies
\begin{equation}\label{inv1}
\sum_j B_j^i\rho_j B_j^{i*}=\rho_i,\;\;\;\forall\; i\in\LL.
\end{equation}
Multiply the above equation by $B_i^{i_1}$ and $B_i^{i_1*}$ on the
left and right, respectively, and summing over $i$ we get
\begin{equation}
\sum_{i,j} B_i^{i_1}B_j^i\rho_j B_j^{i*}B_i^{i_1*}=\sum_i
B_i^{i_1}\rho_iB_i^{i_1*}=\rho_{i_1}, \;\forall\; i\in\LL.
\end{equation}
By induction, we can establish that if $\rho$ is invariant, then one
has
\begin{equation}\label{general_stationar}
\sum_{i_1,i_2,\dots, i_n} B_{i_n}^iB_{i_{n-1}}^{i_{n}}\cdots
B_{i_1}^{i_2}\rho_{i_1}B_{i_1}^{i_2*}\cdots
B_{i_{n-1}}^{i_{n}*}B_{i_n}^{i*}=\rho_i,\;\forall\; i\in\LL.
\end{equation}

The measure $\m_\r$ can be extended to $(\O_{\bz},\frak{F})$ if one
has
\begin{equation}\label{comp2}
\sum_{j\in\L}\bp_{\r,{n+1}}(A^{[0,n+1]}(j,i_0,i_{1},\dots,i_n))=\bp_{\r,n}(A^{[0,n]}(i_0,i_{1},\dots,i_n)).
\end{equation}
Let us check the last equality. Indeed, from \eqref{inv1} we have
\begin{eqnarray*}
\sum_{j\in\L}\bp_{\r,{n+1}}(A^{[0,n+1]}(j,i_0,i_{1},\dots,i_n))&=&\sum_{j\in\L}\tr(B^{i_n}_{i_{n-1}}\cdots
B^{i_1}_{i_{0}}B^{i_0}_{j}\rho_{j}B^{i_0*}_{j}B^{i_1*}_{i_{0}}\cdots
B^{j*}_{i_{n}})\\[2mm]
&=&\tr\bigg(B^{i_n}_{i_{n-1}}\cdots
B^{i_2}_{i_{1}}B^{i_1}_{i_{0}}\rho_{i_0}B^{i_1*}_{i_{0}}B^{i_2*}_{i_{1}}\cdots
B^{i_n*}_{i_{n-1}}\bigg)\\[2mm]
&=&\bp_{\r,n}(A^{[0,n]}(i_0,i_{1},\dots,i_n)).
\end{eqnarray*}
This completes the proof.
\end{proof}

\begin{rem}
The existence of invariant density operators for OQRW $\cm$ has been
studied in \cite{cfrr}. One of the sufficient conditions is based on
the irreducibility of the mapping \cite{carbone2}.
\end{rem}

Let us consider a random process $(X_n)$ defined for
$\w=(i_0,i_1,\dots)\in\O_{\bz_+}$ by $X_n(\w)=i_n$. Then the process
$(X_n)$ with distribution $\bp_\r$, in general, is not Markov one
(see \cite[Example 5.1]{BBP}). Moreover, in the existing literature
properties of the measure $\bp_\rho$ are not well-studied.

In this paper, we will show that the measure $\bp_\r$ can be
interpreted as a quantum Markov chain. This allows us to treat such
quantum walks in the framework of QMC.

\section{Quantum Markov Chains}

In this section, we recall the definition of quantum Markov chain
\cite{Ac,[AcFr80],Park}.

For each $ i \in \bz_+ $, (here $\bz_+$ denotes the set of all non
negative integers) let us associate identical copies of a separable
Hilbert space $ \ch $ and $ C^{*} $-subalgebra $ M_{0} $ of $
\mathcal{B}(\ch) $, where $ \mathcal{B}(\ch) $ is the algebra of
bounded operators on $ \ch: $
\[ \ch_{\{i\}} = \ch, \]
\begin{eqnarray}
\ca_{\{i\}} = M_{0} \subset \mathcal{B}(\ch) \textmd{ for each } i
\in \bz_+
\end{eqnarray}

We assume that any minimal projection in $M_0$ is one dimensional.

For any bounded $ \LL \subset \bz_+ $, let

$$ \ca_{\LL} = \underset{i \in \LL}{\bigotimes} \ca_{i}, \ \ \
\ca_{loc}=\bigcup_{\substack{\LL \subset
\bz_+,|\LL|<\infty}}\ca_{\LL}
$$

\begin{eqnarray}
\ca = \overline{\ca_{loc}}=:\bigotimes_{i\in\bz_+}\ca_i
\end{eqnarray}
 where the bar denotes the norm closure.

 For each $ i \in \bz_+ $, let $ J_{i}  $ be the canonical injection of $ M_{0} $ to the $ i $-th component of $ \ca $.
 For each $ \LL \subset \bz_+ $ we identity $ \ca_{\LL} $ as a subalgebra of $
 \ca $.

 The basic ingredients in the construction of a stationary generalized quantum Markov chain in
 the sense of Accardi and Frigerio \cite{[AcFr80]} consist of a \textit{transition expectation} $\ce:M_{0}\o M_{0} \rightarrow M_{0}$
 which is completely positive unital map (i.e.
 $\ce(\id\o\id)=\id$)),
and a state $ \v_{0} $ on $ M_{0} $. In what follows, a pair $
(\v_{0}, \ce) $ is called a \textit{Markov pair}.

A state $\f$ defined on $\ca$ associated with a Markov pair
$(\v_0,\ce)$, is called \textit{Quantum Markov Chain (QMC)} if
\begin{eqnarray}\label{2.5}
\f(x_{0} \o x_{1} \o \dots \o x_{n}) = \v_{0}(\ce( x_{0} \o
\ce(x_{1} \o  \cdots \o \ce(x_{n} \o \id ) \cdots ) ) )).
\end{eqnarray}

Let $\s:M_{0}\o  M_{0}\to M_{0}\o M_{0}$ be the flipping
automorphism defined by $\s(x\o y)=y\o x$. For every transition
expectation $\ce$ one can associate its \textit{transpose} by
$\ce^{t}=\ce\circ\s$. Hence, given a Markov pair  $ (\v_{0}, \ce) $
we naturally associate its \textit{transpose Markov pair}  $
(\v_{0}, \ce^t) $. The QMC corresponding to the pair $ (\v_{0},
\ce^t) $ is called \textit{transpose QMC} of $\f$, and it is denoted
by $\f^t$.\\

To every transition expectation one associates two kinds of Markov
operators (i.e. completely positive, identity preserving map) from
$M_0$ into itself:
 \begin{eqnarray}\label{TO1}
&&P(a)=\ce(\id\o a), \ \ (\textrm{backward transition operator})\\
&&T(a)=\ce(a\o\id), \ \ (\textrm{forward transition operator}).
 \end{eqnarray}

\begin{rem} It is known \cite{[AcFr80]} that in the classical setting $T$
is the identity operator, and $P$ coincides with usual Markov
transition operator.
\end{rem}

\begin{rem} We point out that the quantum Markov chain can be also
treated as a special case of finitely correlated states (FCS) which
were introduced in \cite{fannes2}. Let us  recall the well-known
construction. Let $\ga,\gb$ be two $C^*$-algebras with units
$\id_\ga$,$\id_\gb$, respectively, $\f_0$ be a state on $\gb$, and
$\ce:\ga\o\gb\to\gb$ be a completely positive unital map such that
for all $b\in\gb$ one has
$$
\f_0(\ce(\id_\ga\o b))=\f_0(b).
$$
For each $a\in\ga$ one defines a map $\ce_a:\gb\to\gb$ by setting
$\ce_a(b)=\ce(a\o b)$. The functional
$$
\f(x_1\o \cdots x_n)=\f_0(\ce_{x_1}\cdots\ce_{x_n}(\id_{\gb}))
$$
uniquely defines a state on the $C^*$-algebra
$\bigotimes\limits_{i\in\bn}\ga_i$, where $\ga_i$ is a copy of
$\ga$. The state $\f$ is the \textit{finitely correlated state}
associated to $(\ga,\gb,\ce,\f_0)$. In case, $\ga=\gb$ we will
recover QMC. On the other hand, we stress that, in general, we
cannot define the transpose FCS on the same algebra with the initial
one. Therefore, in what follows, we will work within QMC scheme. \\
\end{rem}

In what follows, by $\ca_{n]}$ we denote the subalgebra of $\ca$,
generated by the first $(n+1)$ factors, i.e.
$$
a_{n]}=a_0\o a_1\o\cdots a_n\o\id_{[n+1}=J_0(a_0)J_1(a_1)\cdots
J_n(a_n),
$$
with $a_0,a_1,\dots,a_n\in M_0$.  It is well known \cite{Ac2} that
for each $n\in\bn$ there exists a unique completely positive
identity preserving mapping $E_{n]}:\ca\to \ca_{n]}$ such that
\begin{eqnarray}\label{CE1}
E_{n]}(a_{m]})=a_0\o\cdots\o
a_{n-1}\o\ce(a_{n}\o\ce(a_{n+1}\o\cdots\o\ce(a_m\o\id)\cdots)), \ \
m>n
\end{eqnarray}

\begin{rem} We notice that if the state $\v_0$ satisfies the following
condition:
\begin{eqnarray} \label{qqq1}
\v_{o}(\ce(\id\o x)) = \v_{0}(x), \ \ x \in M_{0}
\end{eqnarray}
then the Markov pair $(\v_0,\ce)$ defines local states
\begin{eqnarray}\label{2.5-1}
\f_{[i,n]}(x_{i} \o x_{i+1} \o \dots \o x_{n}) = \v_{0}(\ce( x_{i}
\o \ce(x_{i+1} \o  \cdots \o \ce(x_{n} \o \id ) \cdots ) ) )).
\end{eqnarray}
The family of local states $\{\f_{[i,n]}\}$, due to \eqref{qqq1},
satisfies a compatibility condition, and therefore, the state $\f$
is well defined on $ A_\bz:=\bigotimes\limits_{i\in\bz}\ca_i$.
Moreover, $ \f $ is translation invariant, i.e.  it is invariant
with respect to the shift $ \a $, i.e.  $\a (J_{n}(a)) = J_{n+1}(a)
$.
\end{rem}


Recall that by $ \tr $ we denote the trace on $ M_{0} $ which takes
the value 1 at each minimal projection, and let $ \widetilde{\tr} $
be the trace on $ M_{0} \o M_{0}. $ Denote by $
\widetilde{\tr}^{(i)}, \ \ i =1,2, $ the partial traces defined by
\begin{eqnarray}\label{opra_1,2_1}
\widetilde{\tr}^{(1)}(a\o b) = \tr(a)b, \ \
\widetilde{\tr}^{(2)}(a\o b) = \tr(b)a.
\end{eqnarray}

In \cite{Park} it was given a construction of a quantum Markov chain
defined by a set $ \{ K_i\}_{i\in\bn} $ of conditional density
amplitudes \cite{Ac}. Namely, let $ W_{0} \in M_{0} $ be a density
matrix and $ \{K_i\}_{i\in\bn} $ be a set of the Hilbert-Schmidt
operators in $ M_{0} \o M_{0} $  satisfying
\begin{eqnarray}
&&\sum_{i}\|K_i\|^2<\infty, \nonumber\\ \label{KK1} &&
\sum_{i}\widetilde{\tr}^{(2)}(K_iK_i^{*})= \id.
\end{eqnarray}

Then the corresponding transition expectation \cite{[AcFr80]}
\begin{eqnarray}\label{EC1}
\ce(A) = \sum_{i}\widetilde{\tr}^{(2)}(K_iAK_i^{*}), \ \ A\in
M_{0}\o M_{0}.
\end{eqnarray}
and the density operator $W_0$ form a Markov pair $(W_0,\ce)$.

We point out that the transpose transition expectation associated
with \eqref{EC1} has the following form:
\begin{eqnarray}\label{EC12}
\ce^t(A) = \sum_{i}\widetilde{\tr}^{(2)}(K_i\s(A)K_i^{*}), \ \ A\in
M_{0}\o M_{0}.
\end{eqnarray}

Hence,  $(W_0,\ce^t)$ is a Markov pair. We stress that the QMCs
associated with Markov pairs  $(W_0,\ce)$ and  $(W_0,\ce^t)$,
respectively, may have different properties. We will demonstrate
some differences in the next sections.

\begin{rem} We point out if additionally $W_0$ satisfies
\begin{eqnarray}\label{KK2}
\sum_{i}\widetilde{\tr}^{(1)}(K_i^{*}(W_{0}\o \textbf{1})K_i)=
W_{0}.
\end{eqnarray}
Then the associated QMC associated with the pair $(W_0,\ce)$ is well defined on the
algebra $\ca_\bz$.
\end{rem}
%
%
%
%
%

\section{Quantum Markov Chains associated with OQRW}

In this section, we are going to construct QMCs associated with
OQRW.

Let $\cm$ be a OQRW given by \eqref{MM1}. In this section we will
use notations from the previous sections.

Take a density operator $\rho\in B(\ch\o\ck)$, of the form
$$
\rho=\sum_i\rho_i\o|i\rangle\langle i|.
$$
In what follows, we assume that $\r_i\neq 0$ for all $i\in\bn$.

 We are going to construct a QMC associated with $\rho$ and $\cm$.
To do so, we consider the algebra
$$
\ca=\bigotimes_{i\in\bz_+}\ca_i,$$ where $\ca_i=B(\ch\o\ck)$ for all
$i\in\bz_+$.
%

Define the following operators:
\begin{eqnarray}\label{AK1}
&&A_{ij}=\frac{1}{(\tr(\rho_j))^{1/2}}\big(\rho_j^{1/2}\o
|i\rangle\langle j|\big), \ \ \ i,j\in\LL,\\[2mm]
\label{AK2} &&K_{ij}=M^{i*}_{j}\o A_{ij}.
\end{eqnarray}

Let us show that the pair $(\rho,\{K_{ij}\})$ defines a QMC on
$\ca$. Firstly note that the condition \eqref{KK1} follows from
\eqref{Mij}. Indeed, one has
\begin{eqnarray}\label{AK3}
{\tr}^{(2)}\bigg(\sum_{i,j}K_{ij}K_{ij}^*\bigg)&=&\sum_{i,j}M^{i*}_{j}M^i_{j}\frac{\tr(\rho_j\o
|i\rangle\langle i|) }{\tr(\rho_j)}\nonumber\\[2mm]
&=&\id.
\end{eqnarray}

Hence, one can define a quantum Markov chain $\f_\r$ corresponding
to the pair $(\rho,\ce)$, where the transition expectation $\ce$ is
defined by (see \eqref{EC1}):
\begin{eqnarray}\label{AK7}
\ce(x\o y)&=&\sum_{i,j}M^{i*}_{j}x
M^i_{j}\frac{\tr(\rho_j\o|j\rangle\langle j|y)}{\tr(\rho_j)}.
\end{eqnarray}

\begin{rem} We point out that the constructed QMC is not naturally
associated with the given OQRW, since the transition operator
corresponding to $\ce$ \eqref{AK7} is not equal to the dual $\cm^*$
of $\cm$. Indeed, we have
\begin{eqnarray*}
P(x)&=&\sum_{i,j}M^{i*}_{j}
M^i_{j}\frac{\tr(\rho_j\o|j\rangle\langle j|x)}{\tr(\rho_j)}\\[2mm]
&=&\sum_{j}\frac{\tr(\rho_j\o|j\rangle\langle
j|x)}{\tr(\rho_j)}|j\rangle\langle j|
\end{eqnarray*}
which is clearly not equal to
$$
\cm^*(x)=\sum_{i,j}M^{i*}_{j}x M^i_{j}.
$$
\end{rem}

For the sake completeness, let us provide some properties of the QMC
$\f_\r$, which will allow us to distinguish the differences between
the states $\f$ and $\f^t$.


\begin{prop}\label{QMC-ext} The QMC $\f_\r$ associated with the Markov
pair $(\rho,\ce)$ can be extended to $\ca_\bz$. Moreover, $\f_\r$
has the following form:
\begin{eqnarray}\label{AK8}
\f_\r(x_1\o x_2\o\cdots\o
x_n)=\sum_v\tr(\rho_v)\p_v(x_1)\p_v(x_2)\cdots\p_v(x_n),
\end{eqnarray}
where
\begin{eqnarray}\label{AK9}
\p_v(x)=\frac{1}{\tr(\rho_v)}\sum_{i}\tr\big(B^i_v\rho_v(B_v^i)^*\o|i\rangle\langle
i|x\big), \ \ v\in \LL.
\end{eqnarray}
\end{prop}

\begin{rem} From this theorem we infer that the constructed QMC is a
convex combination of product states. On the other hand, this
example is interesting to study the recurrence within QMC.
\end{rem}

Now let us consider the transpose of $\ce$ (see \eqref{AK7}) which is
defined by
\begin{eqnarray}\label{AK72}
\ce^t(x\o y)&=&\sum_{i,j}\frac{\tr(\rho_j\o|j\rangle\langle
j|x)}{\tr(\rho_j)}(M^i_{j})^*y M^i_{j}.
\end{eqnarray}
It is clear that
$$P(x)=\ce^t(\id\o x)=\cm^*(x).$$

We know that the Markov pair $(\r,\ce^t)$ defines the transpose QMC
$\f^t_\r$ on $\ca$.

\begin{thm}\label{inv11}
If $\rho$ is invariant state of $\cm$ (i.e. $\cm(\rho)=\rho$), then
the QMC $\f^t_\r$ can be extended to $\ca_\bz$. Moreover, $\f^t_\r$
is translation invariant.
\end{thm}

\begin{rem}
\begin{itemize}
\item From Proposition \ref{QMC-ext} and Theorem \ref{inv11} we
immediately infer the difference (for example, the extendibility)
between QMC $\f$ and $\f^t_\r$.


\item We notice that taking into account Proposition \ref{MM2} and the last
theorem,  we infer that the measure $\m_\r$ and the state $\f^t_\r$
have the same extendability property.

\end{itemize}
\end{rem}

For any configuration $\w\in\O_{\bz_+}$, we define a product state
$\f_\w$ on $\ca$ as follows:
$$
\f_\w=\bigotimes_{k\in\w}\f_k,
$$
where
\begin{eqnarray}\label{EF2}
\f_k(x)=\frac{\tr(\rho_k\o  |k\rangle\langle k|x)}{\tr(\rho_k)}.
\end{eqnarray}

It is clear that the mapping $\w\to\f_\w$ is measurable on
$(\O_{\bz_+},\frak{F})$.

\begin{thm}\label{QMC2}
The QMC $\f^t_\r$ on $\ca$ has the following form:
\begin{eqnarray}\label{int1}
\f^t_\r=\int_{\O_{\bz_+}}\f_\w d\bp_\r(\w),
\end{eqnarray}
where $\bp_\r$ is the measure given in Proposition \ref{MM1}.
Moreover, one has
\begin{eqnarray}\label{QMC23}
\f^t_\r\big((\id\o|i_0\rangle\langle i_0|)\o\cdots
\o(\id\o|i_n\rangle\langle
i_n|)\big)=\bp_\r(A^{[0,m]}(i_0,\dots,i_m)).
\end{eqnarray}
for any $i_0,\dots,i_n\in\LL$, $n\in\bn$.
\end{thm}

\begin{rem}\label{rho-k}
We note that if one takes $\r=p\o|k\rangle\langle k|$ ($k\in\bn$),
then we can define the corresponding transition expectation as
follows:
\begin{eqnarray}\label{AK721}
\ce^t(x\o y)&=&\sum_{i,j}\tr(\rho\o|j\rangle\langle j|x)(M^i_{j})^*y
M^i_{j}.
\end{eqnarray}
The corresponding, QMC will be denoted by $\f^t_{p,k}$. Moreover, from the
proof of Theorem \ref{QMC2} we infer that
\begin{eqnarray}\label{QMC23}
\f^t_{p,k}\big((\id\o|k\rangle\langle k|)\o(\id\o|i_1\rangle\langle
i_1|)\o\cdots \o(\id\o|i_n\rangle\langle
i_n|)\big)=\bp_{p,k}(k,i_1,\dots,i_n)
\end{eqnarray}
for any $i_1,\dots,i_n\in\LL$, $n\in\bn$. Here
$$
\bp_{p,k}(k,i_1,\dots,i_n)=\tr(B^{i_n}_{i_{n-1}}\cdots
B^{i_2}_{i_{1}}B^{i_1}_{k}pB^{i_1*}_{k}B^{i_2*}_{i_{1}}\cdots
B^{i_n*}_{i_{n-1}}).
$$

\end{rem}

From Theorem \ref{QMC2} we immediately infer that the QMC $\f^t_\r$
is a solution of the posted problem (see Introduction). Moreover,
the theorem yields that the measure $\bp_\r$ can be considered and
treated as a quantum Markov chain while the measure is not Markov
one. This representation shows that the constructed QMC is a
canonical one associated with OQRW, and it opens new perspectives in
the investigation of $\bp_\r$ within QMC scheme.\\

The representation \eqref{int1}  allows us to investigate the
ergodic properties of the state $\f^t_\r$ in terms of the ergodic
properties of the mapping $\cm^*$ and the measure $\bp_\r$, and
vise-versa. We point out that in \cite{carbone2} it has been
investigated several ergodic properties of OQRW $\cm^*$, but there
was not a connection between the erodicities of $\cm^*$ and the
measure $\bp_\r$. Next results shed new light on this question.

Let us first recall some necessary definitions. Consider the measure $\bp_\r$ on $(\O_{\bz_+},\frak{F})$.
By $s:\O_{\bz_+}\to \O_{\bz_+}$ we denote the \textit{shift}
transformation defined by $(s(\w))_n=\w_{n+1}$. Recall that the
measure $\m_\r$ is called:
\begin{itemize}
\item[1.] \textit{ergodic} if for all $A,B\in\frak{F}$ one has
\begin{eqnarray}\label{m-e}
\lim_{n\to\infty}\frac{1}{n}\sum_{k=0}^{n-1}\bp_\r(A\cap
s^{-n}(B))=\bp_\r(A)\bp_\r(B)
\end{eqnarray}

\item[2.] \textit{weak mixing} if for all $A,B\in\frak{F}$ one has
\begin{eqnarray}\label{m-e}
\lim_{n\to\infty}\bp_\r(A\cap s^{-n}(B))=\bp_\r(A)\bp_\r(B)
\end{eqnarray}
\end{itemize}

Now let us recall some necessary notions about the ergodicity of
$C^*$-dynamical systems. A $C^*$-dynamical system  $(\ga,T,\f)$
\footnote{The triple $(\ga,T,\f)$ is called to be a
\textit{$C^*$-dynamical system}, if $\ga$ is a $C^*$-algebra with
unit, $T:\ga\to\ga$ is completely positive unital mapping with an
invariant state $\f$ on $\ga$.} is called
 \begin{itemize}
\item[1.] \textit{ergodic} if for all $x,y\in \ga$ one has
\begin{eqnarray}\label{m-e}
\lim_{n\to\infty}\frac{1}{n}\sum_{k=0}^{n-1}\f(xT^{n}(y))=\f(x)\f(y)
\end{eqnarray}

\item[2.] \textit{weak mixing} if for all $x,y\in \ga$ one has
\begin{eqnarray}\label{m-e}
\lim_{n\to\infty}\f(xT^{n}(y))=\f(x)\f(y).
\end{eqnarray}
\end{itemize}

Several ergodic properties of $C^*$-dynamical systems have been
investigated in \cite{NSZ}.  The Theorem \ref{QMC2} allows us to
establish the following result.

\begin{thm}\label{QMC22} Let $\r$ be an invariant state for $\cm$.
Cosnider the following statements:
\begin{itemize}
\item[(i)] the $C^*$-dynamical system $(B(\ch)\o B(\ck),\cm^*,\r)$ is ergodic (resp. weak
mixing);

\item[(ii)] the $C^*$-dynamical system $(\ca,\a,\f^t_\r)$ is ergodic (resp. weak
mixing);

\item[(iii)] the measure $\bp_\r$ is ergodic (resp. weak mixing).
\end{itemize}
The following implications hold: (i)$\Rightarrow$ (ii)$\Leftrightarrow$(iii).

Moreover, if $\ce^t(B(\ch)\o B(\ck)\o\id)=B(\ch)\o B(\ck)$, then the all
statements are equivalent.
\end{thm}

\begin{rem}

 We notice that in \cite[Sections 3, 4]{carbone2} it was given certain
sufficient conditions for the ergodicity and weak mixing of $\cm^*$.
The last theorem with the results of the mentioned paper opens new
insight to the properties of the measure $\bp_\r$.
\end{rem}

{\bf Observation.} It is known \cite{AFi} that any quantum Markov
state $\f$ admits a representation
$$
\f=\int_{\O_{\bz_+}}\p_\w d\l(\w)
$$
where $\p_\w$ is product states and $\l$ is a Markov measure on
$(\O_{\bz_+},\frak{F})$. Comparing the last one with Theorem
\ref{QMC2}, we point out that in our case the measure $\bp_\r$ is
not necessary to be a Markov one while the state $\f^t_\r$ is a QMC.
Note that the representation \eqref{int1} does not imply that
$\f^t_\r$ is a product state. Since, any state on $B(\mathcal{H})$ is a limit
of convex combination of
vector states.\\

\begin{rem}\label{FCS1} We point out that the construction of the transition
expectation \eqref{AK7} will provide a more general construction of
finitely correlated states associated with successive measuraments
of a finite family $\cj=\{\Phi_a\}_{a\in L}$ of completely positive
maps $\Phi_a:B(\ch)\to B(\ch)$ such that $\Phi=\sum_{a\in L}\Phi_a$
satisfies $\Phi(\id)=\id$. Here the alphabet $L$ describes the
possible outcomes of a single measurement. Let us consider the
measurable space $(\O,\frak{F})$, where $\O=L^{\bz_+}$. A pair
$(\cj,\r)$, where $\r$ is a density matrix on $\ch$, due to
$\Phi(\id)=\id$, defines a distribution on $\O$ by
$$
\bp_{\cj,\r}(a_1,\dots,a_n)=\tr(\r\Phi_{a_1}\circ\cdots\circ\Phi_{a_n}(\id)).
$$
Let $\ck$ denotes a Hilbert space with orthonormal basis
$\{|a\rangle\}_{a\in L}$. By $C(\ch\o\ck)$ we denote a commutative
subalgebra of $B(\ch)\o B(\ck)$ generated by the projections
$\{\id\o|a\rangle\}_{a\in L}$. Then one can see that the
distribution $P_{\cj,\r}$ can be considered as a state on the algebra
$\gc=\bigotimes_{i\in\bz_+}\cc_i$, where $\cc_i$ is a copy of
$C(\ch\o\ck)$. Now using \eqref{AK7} let us define the transition
expectation $\ce:B(\ch)\o B(\ck)\to B(\ch)$ by
\begin{equation}\label{FCS}
\ce(x\o y)=\sum_{a\in L}\tr((\r\o|a\rangle\langle a|)x)\Phi_a(y).
\end{equation}
Assume that $\Phi^*(\r)=\r$, then $(B(\ch), B(\ck),\ce,\r)$ defines
a FCS $\f$ on $\ca$ such that
$$
\f\lceil\gc=\bp_{\cj,\r}, \ \ \ce(\id\o x)=\Phi(x).
$$
Hence, one can study the state $\f$ and $\bp_{\cj,\r}$ all together.
This with results of \cite{BBP,BJPP,fannes3} opens new insight to
the entropy production and recurrence for finitely correlated
states.
\end{rem}

\section{Recurrence of QMC associated with OQRW}

In this section is devoted to the notion of recurrence with QMC
scheme. Furthermore, some examples will be illustrated.

Following \cite{accardi1} we recall a definition of the stopping
time associated with a projection $e\in M_0$, which is a sequence
$\{\tau_k\}$ defined by
\begin{eqnarray*}
&&\t_0=e\o\id_{[1}=J_0(e),\\
&&\t_1=e^{\perp}\o e\o\id_{[2}=J_0(e^{\perp})J_1(e),\\
&&\cdots \cdots,\\
&&\t_k=\underbrace{e^{\perp}\o \cdots \o e^{\perp}}_{k-1}\o
e\o\id_{[k+1}=J_0(e^{\perp})\cdots J_{k-1}(e^{\perp})J_k(e),\\
&&\t^n_\infty=\underbrace{e^{\perp}\o \cdots \o
e^{\perp}}_{n}\o\id_{[n+1}=J_0(e^{\perp})\cdots J_n(e^{\perp}).
\end{eqnarray*}

Since the sequence $\{\t^n_\infty\}$ is decreasing, therefore, its
strong limit exists in $\ca''$ (bicommutant of $\ca$), and it is
denoted by
$$
\t_\infty:=\lim_{n\to\infty}\t^n_\infty
$$

One can see that
\begin{equation}\label{tau1}
\sum_{k\geq 0}\t_k=\id-\t_{\infty},
\end{equation}
where the sum is meant in the strong topology in $\ca''$.

\begin{defn} Let $\f$ be a QMC on $\ca$ associated with the pair $(\v_0,\ce)$. A projection $e$ is called
\begin{enumerate}
\item[(i)] \textit{$\ce$-completely accessible} if
$$
E_{0]}(\t_\infty):=\lim_{n\to\infty}E_{0]}(\t^n_\infty)=0;
$$
\item[(i)]  $\f$-\textit{completely
accessible} if $ \f(\t_\infty)=0$;
\item[(iii)] $\ce$-\textit{recurrent} if $\tr(\ce(e\o\id))<\infty$
and one has
$$
\frac{1}{\tr(\ce(e\o\id))}\tr\left(E_{0]}\bigg(\sum_{n\geq
0}J_0(e)\o\t_n\bigg)\right)=1
$$
\item[(iv)] $\f$-\textit{recurrent} if $\f(J_0(e))\neq 0$ and
$$
\frac{1}{\f(J_0(e))}\f\left(\sum_{n\geq 0}J_0(e)\o\t_n\right)=1
$$
\end{enumerate}
\end{defn}

\begin{defn} Let $\f$ be a QMC on $\ca$ associated with the pair $(\v_0,\ce)$ and $e,f$ be two projections in $M_0$. A projection $f$ is called
\begin{enumerate}
\item[(i)] \textit{$\ce$-accessible} from $e$ if there is $n\in\bn$
such that
$$
E_{0]}(J_0(e)\o\id_{n-1]}\o J_n(f))\neq 0;
$$
\item[(ii)] \textit{$\f$-accessible} from $e$ (we denote it as $e\to^\f f$) if there is $n\in\bn$
such that
$$
\f(J_0(e)\o\id_{n-1]}\o J_n(f))\neq 0.
$$
\end{enumerate}
If $e\to^\f f$ and $f\to^\f e$, then $e$ and $f$ are called
\textit{$\f$-communicate} and one denotes $e\leftrightarrow^\f f$.
\end{defn}

\begin{rem} We notice that the $\ce$-accessibility and
$\ce$-recurrence have been introduced and studied in
\cite{accardi1}. From the definitions one can infer that, due to the
Markov property of $\f$, $\ce$-accessibility and $\ce$-recurrence
imply $\f$-accessibility and $\f$-recurrence, respectively. The
reverse is not true (see Example \ref{Exm1}).
\end{rem}

Now we are going to study several properties of $\f$-accessibility
and $\f$-recurrence, respectively.

\begin{thm}\label{GG1} Let $\f$ be a QMC on $\ca$ associated with the pair $(\v_0,\ce)$.
The following statements hold:
\begin{enumerate}
\item[(i)] $\f(J_n(e))=0$ for all $n\in\bn$ if and only if for every $k\in\bn$ one has $\f(\b^k(\t_\infty))=1$, where
$$\b(a_0\o a_1\o\cdots a_n)=\id\o a_0\o a_1\o\cdots a_n, \ \ \textrm{for any} \ n\in\bn;$$
\item[(ii)] $e$ is $\f$-recurrent if and only if
$\f(J_0(e)\o\t_\infty)=0$. In particular, if $e$ is
$\f$-completely accessible, then $e$ is $\f$-recurrent;

\item[(iii)] if $\f$ is faithful, then $e$ is $\f$-completely
accessible if and only if $e$ is $\f$-recurrent;

\item[(iv)] if all projections in $M_0$ are $\f$-communicating and $e$ is
$\f$-recurrent, then $e$ is $\f$-completely accessible.
\end{enumerate}
\end{thm}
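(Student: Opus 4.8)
The whole argument rests on the first--passage identity \eqref{tau1}, on its image under the shift $\b$, and on the normality of the GNS extension of $\f$ to $\ca''$, which is what lets me pass $\f$ through the strong limits defining $\t_\infty$ and $\b^k(\t_\infty)$. Writing $\b(\t_\infty)=\prod_{j\ge 1}J_j(e^\perp)$, the relations I would record at the outset are
\begin{equation*}
\t_\infty=J_0(e^\perp)\,\b(\t_\infty),\qquad J_0(e)\o\t_\infty:=J_0(e)\,\b(\t_\infty)=\b(\t_\infty)-\t_\infty,
\end{equation*}
together with the consequence of applying $\b$ to \eqref{tau1} and multiplying on the left by $J_0(e)$,
\begin{equation*}
\sum_{k\ge 0}J_0(e)\o\t_k=J_0(e)\,\b\Big(\sum_{k\ge0}\t_k\Big)=J_0(e)\big(\id-\b(\t_\infty)\big)=J_0(e)-\big(\b(\t_\infty)-\t_\infty\big).
\end{equation*}
Applying the (normal) state $\f$ gives $\f\big(\sum_{k\ge0}J_0(e)\o\t_k\big)=\f(J_0(e))-\f(J_0(e)\o\t_\infty)$, so that, granting $\f(J_0(e))\neq0$, $\f$-recurrence is \emph{equivalent} to $\f(J_0(e)\o\t_\infty)=0$. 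This is the nontrivial content of the first clause of (ii), and the remaining clauses are read off from the same relations.

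For (i), I would use that $\b^k(\t_\infty)=\prod_{j\ge k}J_j(e^\perp)$ is dominated by each of its factors, $\b^k(\t_\infty)\le J_n(e^\perp)$ for $n\ge k$. The implication ``$\Leftarrow$'' is then immediate: $\f(\b^k(\t_\infty))=1$ forces $\f(J_n(e^\perp))=1$, i.e.\ $\f(J_n(e))=0$, for every $n\ge k$, and letting $k$ range over $\bn$ covers all $n$. For ``$\Rightarrow$'' I would combine finite subadditivity, $\f\big(\big(\bigwedge_{k\le j\le N}J_j(e^\perp)\big)^\perp\big)\le\sum_{k\le j\le N}\f(J_j(e))=0$, with normality of $\f$ along the decreasing sequence $\prod_{k\le j\le N}J_j(e^\perp)\downarrow\b^k(\t_\infty)$, to conclude $\f(\b^k(\t_\infty))=1$ for each $k$.

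Clause (ii) splits as indicated, and its ``in particular'' together with (iii) amount to converting the scalar identity $\f(J_0(e)\o\t_\infty)=\f(\b(\t_\infty))-\f(\t_\infty)$ into a statement about operators. Under faithfulness of $\f$ this is clean. If $e$ is $\f$-completely accessible then $\f(\t_\infty)=0$ gives $\t_\infty=0$; since $J_0(e^\perp)$ and $\b(\t_\infty)$ act on disjoint factors and $e\neq\id$ (the case $e=\id$ being trivial), $\t_\infty=J_0(e^\perp)\,\b(\t_\infty)=0$ forces $\b(\t_\infty)=0$, whence $\f(J_0(e)\o\t_\infty)=0$ and $e$ is $\f$-recurrent. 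Conversely, $\f$-recurrence gives $J_0(e)\,\b(\t_\infty)=0$ by faithfulness, so $\b(\t_\infty)=J_0(e^\perp)\b(\t_\infty)=\t_\infty$; as $e\neq0$ (recurrence requires $\f(J_0(e))\neq0$) the disjointness of the supports again yields $\b(\t_\infty)=0$, hence $\t_\infty=0$ and $e$ is $\f$-completely accessible. This proves (iii), and the faithful case of the ``in particular'' of (ii).

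For (iv) I would argue in the spirit of the classical class--property proof. Recurrence of $e$ already delivers $\f(\b(\t_\infty))=\f(\t_\infty)$; the plan is to promote this to $\f(\b^{k}(\t_\infty))=\f(\t_\infty)$ for every $k$ by invoking $\f$-accessibility to reinsert a visit to $e$ after each initial block, thereby exhibiting $\f(\t_\infty)$ as a shift--stable tail quantity subject to a $0$--$1$ alternative. Since $\f(J_0(e))\neq0$ rules out $\f(J_n(e))=0$ for all $n$, clause (i) forbids the value $1$, which leaves $\f(\t_\infty)=0$, i.e.\ $\f$-complete accessibility. I expect this final step to be the main obstacle: because $\f$ is not assumed translation invariant one cannot simply transport $\f$ by $\b$, so the equality of the tail probabilities $\f(\b^{k}(\t_\infty))$ must be extracted from the Markov structure of $\ce$ and from the communication hypothesis $e\leftrightarrow^\f f$, and at each stage the interchange of $\f$ with the strong limit $\t_\infty=\lim_n\t^n_\infty$ has to be justified by normality.
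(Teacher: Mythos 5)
Your parts (i)--(iii) are correct and follow essentially the paper's own route: everything is extracted from the first-passage identity \eqref{tau1} shifted by $\b$, and your operator identity $J_0(e)\o\t_\infty=\b(\t_\infty)-\t_\infty$ is exactly what underlies the paper's computation in (ii), while (iii) is argued there the same way (faithfulness turns the scalar equalities into $\t_\infty=0$, resp.\ $\b(\t_\infty)=0$). In (i) the paper proceeds by bounding $\b^m(\t_k)\le J_{m+k}(e)$ and summing via \eqref{tau1}, whereas you use subadditivity over the projections $J_j(e)$; as stated, subadditivity of a state over a join of projections is false in general (vector states in $M_2$ give counterexamples), but here the $J_j(e)$ commute, so $\bigvee_j J_j(e)\le\sum_j J_j(e)$ holds as an operator inequality and your step survives. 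One caveat: you prove the ``in particular'' clause of (ii) only for faithful $\f$. The paper asserts it in general, and it does hold for every QMC once one uses the translation invariance the paper attaches to $\f$ (see its Remark around \eqref{qqq1}): then $\f(\b(\t_\infty))=\f(\t_\infty)$, and your identity gives $\f(J_0(e)\o\t_\infty)=0$ outright.

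The genuine gap is (iv): what you give is a plan, not a proof. The decisive step --- that the ``shift-stable'' quantity $\f(\t_\infty)$ obeys a $0$--$1$ alternative --- is precisely what would have to be established, no tail-triviality or ergodicity is supplied by the hypotheses, and you never show how the communication hypothesis ``reinserts a visit to $e$''; you yourself flag this as the main obstacle. The paper's argument avoids any $0$--$1$ law and runs by contradiction: if $\f(\t_\infty)>0$, recurrence gives $\lim_n\f(J_0(e)\o\t^n_\infty)=0$; inserting $\id=J_1(e)+J_1(e^\perp)$ at site $1$, using $J_0(e)\o J_1(e^\perp)\o\t^n_\infty=J_0(e)\o\t^{n+1}_\infty$ for one summand and the shift invariance of $\f$ for the other (this is exactly the invariance you declined to use; the paper does use it, via the domination $J_0(e)\o J_1(e)\o\t^n_\infty\le\a\big(J_0(e)\o\t^n_\infty\big)$), one obtains $\lim_n\f(J_0(e)\o\id\o\t^n_\infty)=0$, and by iteration
\begin{equation*}
\f\big(J_0(e)\o\id_{k-1]}\o\t_\infty\big)=0\qquad\textrm{for every }k\in\bn.
\end{equation*}
Then one chooses a nonzero projection $p\in M_0$ with $\t_\infty\ge\l p$ for some $\l>0$, whence $\f\big(J_0(e)\o\id_{k-1]}\o J_k(p)\big)\le\tfrac1\l\,\f\big(J_0(e)\o\id_{k-1]}\o\t_\infty\big)=0$ for all $k$, i.e.\ $e$ and $p$ do not $\f$-communicate --- contradicting the hypothesis. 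This is where the communication assumption actually enters, a mechanism entirely absent from your sketch; as it stands, your (iv) is incomplete.
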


\begin{cor}\label{GG2} Let $\f$ be a QMC on $\ca_\bz$ associated with the pair $(\v_0,\ce)$.
The following statements hold:
\begin{enumerate}
\item[(i)] $\f(e)=0$ if and only if $\f(\t_\infty)=1$;
\item[(ii)] $e$ is $\f$-recurrent if and only if
$e$ is $\f$-completely accessible;

\item[(iii)] if $\f$ is faithful, then $e$ is $\f$-completely
accessible if and only if $\ce$-completely accessible.
\end{enumerate}
\end{cor}

Now we turn our attention of the QMC associated with OQRW. Let $\cm$
be a OQRW given by \eqref{MM1}. In what follows, we will use
notations from the previous sections.

Given a density operator $\r\in B(\mathcal{H}\o\ck)$, one can construct a QMC
$\f^t_\r$ on $\ca$. Due to \eqref{QMC23} the $\f^t_\r$-recurrence of
a projection $\id\o|j\rangle\langle j|$ means the following one:
$$
\sum_{k=1}^\infty\sum_{i_1,\dots,i_{k-1}\atop i_\ell\neq j, 1\leq
\ell\leq k-1}
\bp_\r(A^{[0,k+1]}(j,i_1,\dots,i_{k-1},j))=\bp_\r(A^{[0,0]}(j))
$$
which is equivalent to
$$
\bp_{\r,j}(t_j<\infty)=1
$$
where $t_j(\w)=\inf\{n\in\bn: \ \w_n=j\}$.

Let us take $\r=p\o|k\rangle\langle k|$. Then from Remark
\ref{rho-k} one finds that the $\f^t_{p,k}$-recurrence of
$\id\o|k\rangle\langle k|$  is equivalent to
$\bp_{p,k}(t_k<\infty)=1$. If the projection $\id\o|k\rangle\langle
k|$ is $\f^t_{p,k}$-recurrent for all $p$, then we obtain
LS-recurrence \cite{BBP,cfrr}. Moreover, we obtain that
$\ce$-recurrence implies LS-recurrence. Hence, the
$\f^t_\r$-recurrence is weaker among the recurrences investigated in
\cite{BBP,cfrr}. However, the notions of recurrence studied in
\cite{BBP,CGL} are purely classical, i.e. they depend on a classical
probability distribution (which is not necessary to be Markov one,
therefore, it appeared different phenomena than Markov one) and they are
not connected to the noncommutative observables. In the present
paper, we propose to study $\f^t_\r$-recurrence which could treat
more general events in the non-commutative setting. For example, one
can study $\f^t_\r$-recurrence projections rather than
$\id\o|k\rangle\langle k|$. The recurrence of this kind of
projection can not be studied by means of ones investigated in
\cite{BBP,cfrr}. Moreover, the present approach can be also
applied to the case of finitely correlated states (see Remark
\ref{FCS1}).

\begin{thm}\label{1fi-rec} Let $\cm$ be a OQRW,
$\r=p\o|k\rangle\langle k|$ be an initial density matrix and $Q\in
B(\ch)$ be a projection. Then the following statements hold:
\begin{enumerate}
\item[(i)] if $\tr(pQ)=1$, then the
projection $Q\o|k\rangle\langle k|$ is $\f^t_{p,k}$-recurrent if and
only if $\bp_{p,k}(t_k<\infty)=1$;
\item[(ii)] if $\tr(pQ)<1$, then the
projection $\id-Q\o|k\rangle\langle k|$ is $\f^t_{p,k}$-recurrent.
\end{enumerate}
\end{thm}

From this theorem we infer that the essential difference between the
$\f^t_{p,k}$-recurrence and LS-recurrence for the projection
$\id-Q\o|k\rangle\langle k|$ which is a quantum phenomena that can
not describe by the classical approach, i.e. using the distribution
$\bp_{p,k}$.

\section{Examples}

\begin{exam}
We consider the example given in section 12.1 of \cite{attal}. In
our notation this example is given by $\LL=\{1,2\}$, $\ch=\bc^2$
(with canonical basis $(e_1,e_2)$) and transitions are given by
\[ B^1_{1}=\begin{pmatrix}a & 0 \\ 0 & b\end{pmatrix}\quad
B^{1}_2=\begin{pmatrix}0 & \!\! 1 \\ 0 & 0\end{pmatrix} \quad
B^2_2=\begin{pmatrix}1 & 0 \\ 0 & 0\end{pmatrix}\quad
B^2_1=\begin{pmatrix}c & 0 \\ 0 & d\end{pmatrix}\] where
$|a|^2+|b|^2=|c|^2+|d|^2=1$, $0<|a|^2,|c|^2<1$.

{\bf 1.} Denote
$$
\rho=\begin{pmatrix}1 & 0 \\ 0 & 0\end{pmatrix}\otimes |2\rangle
\langle 2|=:\rho_0\o |2\rangle \langle 2|. $$

A straightforward computation shows that
\begin{eqnarray*}
\cm(\rho)&=&\sum_{i,j=1}^2B^i_j\o |i\rangle \langle j|\big((\rho_0\o
|2\rangle \langle 2|\big) B^{i*}_j\o |j\rangle \langle i|\\[2mm]
&=& \sum_{i=1}^2B^i_2\rho_0B^{i*}_2\o |i\rangle \langle i|.
\end{eqnarray*}

Due to
\begin{equation}\label{B11}
B^1_2\rho_0B^{1*}_2=0, \ \ B^2_2\rho_0B^{2*}_2=\rho_0
\end{equation}
we conclude that $\rho$ is an invariant state for $\cm$.

From \eqref{B11} we immediately infer that
$$
\bp_{\r_0,2}(2,i_1,\dots,i_n)= \left\{ \begin{array}{ll} 1, \ \
\textrm{if} \ \ n=1, i_1=2,\\
0, \ \ \textrm{if} \ \  i_\ell=1, 1\leq \ell\leq n, n\geq 1,
\end{array}
\right.
$$
hence $|2\rangle\langle 2|$ is $\bp_{\r_0,2}$-recurrent.

Now consider the projection $e_Q=Q\o |2\rangle\langle 2|$, where $Q$
is a projection in $B(\ch)$. By denoting $\l=\tr(p_0Q)$, from
\eqref{QMC21},\eqref{EF2} we obtain
\begin{eqnarray*}
\f^t_{\r_0,2}(e_{Q}\o \underbrace{e_{Q}^\perp\o\cdots \o
e_{Q}^\perp}_{n})&=&\sum_{i_1,\dots,i_n}\bp_{\r_0,2}(2,i_1,\dots,i_n)\l\big(1-\l\d_{i_1,2})\cdots \big(1-\l\d_{i_n,2})\nonumber\\[2mm]
&=&\bp_{\r_0,2}(2,\dots,2)\l\big(1-\l)^{n}\\
&=&\l\big(1-\l)^{n}\to 0 \ \ \ \textrm{as} \ \ n\to\infty
\end{eqnarray*}
for any $\l\in(0,1]$. Hence, due to Theorem \ref{GG1} (ii) from the
last relation one gets that the projection $e_Q$ is
$\f^t_{\r_0,2}$-recurrent, if $\tr(\r_0Q)>0$. Note that the
recurrence of the projection $e_Q$ cannot be treated by means of the
classical measure $\bp_{\r_0,2}$.

{\bf 2.} Now consider
$$
\tilde \rho=\begin{pmatrix}0 & 0 \\ 0 & 1\end{pmatrix}\otimes
|2\rangle \langle 2|=:\rho_1\o |2\rangle \langle 2|. $$

Then one can calculate that
\begin{eqnarray}\label{B12}
&& B^1_2\rho_1B^{1*}_2=\r_0,  \ \ B^2_2\rho_1B^{2*}_2=0,
\\[2mm] \label{B22}
&&B^1_2\rho_0B^{1*}_2=|c|^2\r_0, \ \ B^1_1\rho_0B^{1*}_2=|a|^2\r_0.
\end{eqnarray}

Then from \eqref{B12},\eqref{B22} we obtain
\begin{eqnarray*}
\bp_{\r_1,2}(t_2<\infty)&=&\sum_{n=1}^\infty\bp_{\r_1,2}(2,\underbrace{1,\dots,1}_n,2)\\[2mm]
&=&\sum_{n=1}^\infty|c|^2|a|^{2(n-1)}\\[2mm]
&=&\frac{|c|^2}{1-|a|^2}\\[2mm]
&=&1
\end{eqnarray*}
which means that $|2\rangle\langle 2|$ is $\bp_{\r_1,2}$-recurrent.

Let us consider the projection $e_Q=Q\o |2\rangle\langle 2|$. Taking
into account \eqref{B12} one gets
$$
\bp_{\r_1,2}(2,2,i_2,\dots,i_n,2)=0
$$
for any $i_2,\dots,i_n\in\{1,2\}$ ($n\geq 2)$ which with \eqref{B22}
yields
\begin{eqnarray*}
\f^t_{\r_1,2}(e_{Q}\o \underbrace{e_{Q}^\perp\o\cdots \o
e_{Q}^\perp}_{n}\o e_{Q})&=&\sum_{i_1,\dots,i_n}\bp_{\r_1,2}(2,i_1,\dots,i_n,2)\l^2 \prod_{1\leq k\leq n}\big(1-\l\d_{i_k,2})\nonumber\\[2mm]
&=&\sum_{i_2,\dots,i_n}\bp_{\r_1,2}(2,1,i_2\dots,i_n,2)\l^2 \prod_{2\leq k\leq n}\big(1-\l\d_{i_k,2})\nonumber\\[2mm]
&&+\sum_{i_2,\dots,i_n}\bp_{\r_1,2}(2,2,i_2,\dots,i_n,2)\l^2(1-\l)
\prod_{2\leq k\leq n}\big(1-\l\d_{i_k,2})\\[2mm]
&=&\l^2\sum_{k=1}^{n}\bp_{\r_1,2}(2,\underbrace{1,\dots,1}_{n-k+1},\underbrace{2,\dots,2}_k)(1-\l)^{k-1}\\[2mm]
&=&\l^2|c|^2\sum_{k=1}^{n}|a|^{2(n-k)}(1-\l)^{k-1}\\[2mm]
&=&\frac{\l^2|c|^2(|a|^{2n}-(1-\l)^n)}{|a|^2+\l-1}.
\end{eqnarray*}
Now taking into account $\l=\f^t_{\r_1,2}(e_Q)$, from the last
equality with $|a|^2+|c|^2=1$ we find
\begin{eqnarray*}
\frac{1}{\f^t_{\r_1,2}(e_Q)}\sum_{n\geq 0}\f^t_{\r_1,2}(e_{Q}\o
\underbrace{e_{Q}^\perp\o\cdots \o e_{Q}^\perp}_{n}\o
e_{Q})&=&\frac{\l^2|c|^2}{|a|^2+\l-1}\bigg(\frac{|a|^2}{1-|a|^2}-\frac{1-\l}{\l}\bigg)\\[2mm]
&=&\frac{\l^2|c|^2}{\l-|c|^2}\bigg(\frac{|a|^2}{|c|^2}-\frac{1-\l}{\l}\bigg)\\[2mm]
&=&\frac{|a|^2\l}{\l-|c|^2}-\frac{|c|^2(1-\l)}{\l-|c|^2}\\[2mm]
&=&1
\end{eqnarray*}
Hence, for any $Q$ with $\tr(\r_1Q)>0$, the projection $e_Q$ is
$\f^t_{\r_1,2}$-recurrent.

%
%

{\bf 3.} In this case, we assume that $c=0$, and take another
initial state
$$
\rho=\frac{1}{2}\rho_0\o |1\rangle \langle 1|+\frac{1}{2}\rho_0\o
|2\rangle \langle 2|,
$$
here $\r_0$ is given as above.  One can see that
$B^2_1\rho_0B^{2*}_1=0$, $B^1_1\rho_0B^{1*}_1=\rho_0$. Hence, we
conclude that
$$
\tr(B^{i_n}_{i_{n-1}}\cdots B^{i_2}_{i_1}
B^{i_1}_{i_0}\rho_{0}B^{i_1*}_{i_0}B^{i_2*}_{i_1}\cdots
B^{i_n}_{i_{n-1}})=0,
$$
if there is $k_0\in\{0,\dots,n\}$ such that $i_{k_0}\neq i_{k_0-1}$.

Assume that $e^{\perp}=Q\o |1\rangle \langle 1|$. Hence, we have
\begin{eqnarray*}
\f^t_\r(\tau^{n}_\infty)&=&\frac{1}{2}\sum_{i_0,i_1,\dots,i_n}\tr\left(B^{i_n}_{i_{n-1}}\cdots
B^{i_2}_{i_1}
B^{i_1}_{i_0}\rho_{0}B^{i_1*}_{i_0}B^{i_2*}_{i_1}\cdots
B^{i_n}_{i_{n-1}}\right)\f_{i_0}(e^{\perp})\cdots\f_{i_n}(e^{\perp})\\[2mm]
&=&\frac{1}{2}(\tr(\rho_0Q))^n.
\end{eqnarray*}
So, if $\tr(\rho_0Q)<1$ then $e$ is $\f^t$-completely accessible.

Similarly, one gets
$$
\f^t_\r(e\o\tau^{n}_\infty)=\frac{1}{2}(1-\tr(\rho_0Q))(\tr(\rho_0Q))^{n}.
$$
Hence, we infer that if $\tr(\rho_0p)\neq 1$, then $e$ is
$\f^t_\r$-completely accessible iff $\f^t$-recurrent. Otherwise (if
$\tr(\r_0Q)=1$, $e$ is $\f^t_\r$-recurrent, but not
$\f^t_\r$-complete accessible.\\
\end{exam}

\begin{exam}\label{Exm1} In this example, we are going to show that
$ \ce$-recurrence is stronger than $\f$-recurrence. To do so, as an
illustrative example, we are going work with QMC associated given by
\eqref{AK8}.

Let us consider a stationary OQRW on $\bz$ with nearest-neighbor
jumps (see \cite{attal}). Let $\ch$ be a Hilbert space and  $B,C\in
B(\ch)$ such that $B^*B+C^*C=\id$. We define the walk as follows:
assume that $B_{i}^{i-1}=B$ and $B^{i+1}_i=C$ for all $i\in\bz$, all
the others $B^i_j$ being equal to 0. Then one can calculate that
\begin{eqnarray}\label{Ex1}
\cm(\rho)=\sum_j\big(B\o|j-1\rangle\langle j|\rho B^*\o
|j\rangle\langle j-1|+C\o|j+1\rangle\langle j|\rho C^*\o
|j+1\rangle\langle j|\big).
\end{eqnarray}

Take a density operator $\rho\in B(\ch\o\ck)$, of the form
$$
\rho=\sum_i\rho_i\o|i\rangle\langle i|,
$$
with $\rho_i\neq 0$ for all $i$.

Take a projection $e_{Q,k}=Q\o |k\rangle\langle k|$ for some
$k\in\bz$, here $Q\in B(\ch)$ is a projection. Then due to Corollary
\ref{Eee} and \eqref{Ex1} we have
\begin{eqnarray}\label{Ex2}
\ce(e_{Q,k}\o E_{0]}(\tau^n_\infty))&=&\sum_{\ell}B^*\o
|\ell\rangle\langle \ell-1| e_{Q,k} B\o |\ell-1\rangle\langle
\ell|\big(\psi_\ell(e_{Q,k}^{\perp})\big)^{n}\nonumber\\[2mm]
&&+\sum_{\ell}C^*\o |\ell\rangle\langle \ell+1| e_{Q,k} B\o
|\ell+1\rangle\langle \ell|\big(\psi_\ell(e_{Q,k}^{\perp})\big)^{n}\nonumber\\[2mm]
&=&B^*Q B\o |k+1\rangle\langle k+1|\big(\psi_{k+1}(e_{Q,k}^{\perp})\big)^{n}\nonumber\\[2mm]
&&+C^*Q C\o |k-1\rangle\langle
k-1|\big(\psi_{k-1}(e_{Q,k}^{\perp})\big)^{n}.
\end{eqnarray}

Taking into account \eqref{AK9} one finds
\begin{eqnarray}\label{Ex3}
\psi_{\ell}(e^{\perp}_{Q,k})=\left\{
\begin{array}{ll}
1-\frac{\tr(B\rho_{k+1}B^*Q)}{\tr(\rho_{k+1})}, \textrm{if} \ \ \ \ell=k+1\\[4mm]
1-\frac{\tr(C\rho_{k-1}C^*Q)}{\tr(\rho_{k-1})}, \textrm{if} \ \ \
\ell=k-1\\
\end{array}
\right.
\end{eqnarray}
Therefore, from \eqref{Ex2} with \eqref{Ex3} one gets
\begin{eqnarray}\label{Ex4}
\ce(e_{Q,k}\o E_{0]}(\tau^n_\infty))&=&
B^*Q B\o |k+1\rangle\langle k+1|\bigg(1-\frac{\tr(B\rho_{k+1}B^*Q)}{\tr(\rho_{k+1})}\bigg)^{n}\nonumber\\[2mm]
&&+C^*Q C\o |k-1\rangle\langle
k-1|\bigg(1-\frac{\tr(C\rho_{k-1}C^*Q)}{\tr(\rho_{k-1})}\bigg)^{n}.
\end{eqnarray}

Hence, if one has
\begin{eqnarray}\label{Ex5}
0<\tr(B\rho_{k+1}B^*Q)<\tr(\rho_{k+1}), \ \
0<\tr(C\rho_{k-1}C^*Q)<\tr(\rho_{k-1})
\end{eqnarray}
then from \eqref{Ex4} we infer that $\ce(e_{Q,k}\o
E_{0]}(\tau^n_\infty))\to 0$ as $n\to\infty$ which according to
\cite[Theorem 1, (iii)]{accardi1} implies that $e_{Q,k}$ is
$\ce$-recurrent.

Now let us look for the $\f$-recurrence. From \eqref{AK8} we have
$$
\f(e_{Q,k}\o\underbrace{e_{Q,k}^{\perp}\o\cdots \o
e_{Q,k}^{\perp}}_n)=\sum_\ell\tr(\rho_\ell)\p_\ell(e_{Q,k})\big(\psi_{\ell}(e_{Q,k}^{\perp})\big)^n
$$
From
\begin{eqnarray*}
\psi_{\ell}(e_{Q,k})=\left\{
\begin{array}{ll}
\frac{\tr(B\rho_{k+1}B^*Q)}{\tr(\rho_{k+1})}, \textrm{if} \ \ \ \ell=k+1\\[4mm]
\frac{\tr(C\rho_{k-1}C^*Q)}{\tr(\rho_{k-1})}, \textrm{if} \ \ \
\ell=k-1\\
\end{array}
\right.
\end{eqnarray*}
with \eqref{Ex3} we obtain
\begin{eqnarray}\label{Ex6}
\f(e_{Q,k}\o\tau^n_\infty)&=&\tr(B\rho_{k+1}B^*Q)\bigg(1-\frac{\tr(B\rho_{k+1}B^*Q)}{\tr(\rho_{k+1})}\bigg)^{n}\nonumber\\[2mm]
&&+
\tr(C\rho_{k-1}C^*Q)\bigg(1-\frac{\tr(C\rho_{k-1}C^*Q)}{\tr(\rho_{k-1})}\bigg)^{n}.
\end{eqnarray}

Clearly, if \eqref{Ex5} is satisfied then $e_{Q,k}$ is
$\f$-recurrent. This means that under the condition \eqref{Ex5} the
$\ce$-recurrence is equivalent to the $\f$-recurrence.

If one of the following conditions
\begin{enumerate}
\item[(a)] $supp(\rho_{k+1})supp(B)=0$ and
$0<\tr(C\rho_{k-1}C^*Q)<\tr(\rho_{k-1})$,

\item[(b)] $supp(\rho_{k-1})supp(C)=0$ and
$0<\tr(B\rho_{k+1}B^*Q)<\tr(\rho_{k+1})$,

\item[(c)] $supp(\rho_{k+1})supp(B)=0$ and
$supp(\rho_{k-1})supp(C)=0$,
\end{enumerate}
is satisfied, then from \eqref{Ex6} we infer that $e_{Q,k}$ is still
$\f$-recurrent, while it is not $\ce$-recurrent.\\
\end{exam}

\section{Proofs for Section 4}

In this section we collect all the proofs of the formulated Theorems
and Propositions in Section 4.

We first need the following auxiliary fact.

\begin{lem}\label{aux} Let $\rho=\sum\limits_\ell\rho_\ell\o |\ell\rangle\langle
\ell|$. Then one has:\\
\begin{enumerate}
\item[(i)] $\tr(\rho M^{u*}_{v}x
M^u_{v})=\tr(B^u_{v}\rho_v B^{u*}_{v}\o |u\rangle\langle u|x)$;\\

\item[(ii)] $\tr(\rho_v\o |v\rangle\langle v|  M^{i*}_{j}x
M^i_{j})=\tr(B^i_{j}\rho_v B^{i*}_{j}\o |i\rangle\langle
i|x)\d_{vj}$.
\end{enumerate}
\end{lem}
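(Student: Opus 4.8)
The plan is to prove the two trace identities by direct computation, unfolding the definition $M^i_j = B^i_j \otimes |i\rangle\langle j|$ and using the explicit form $\rho = \sum_\ell \rho_\ell \otimes |\ell\rangle\langle\ell|$ together with the orthonormality $\langle a|b\rangle = \delta_{ab}$. These are essentially the same algebraic manipulations already carried out in \eqref{AK5}, so I expect no genuine obstacle; the work is bookkeeping of tensor factors and indices.

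For part (i), I would compute $M^{u*}_v x M^u_v$ under the trace. Writing $M^u_v = B^u_v \otimes |u\rangle\langle v|$ gives $M^{u*}_v = B^{u*}_v \otimes |v\rangle\langle u|$. The plan is to insert the expansion of $\rho$ and use cyclicity of the trace, $\tr(\rho M^{u*}_v x M^u_v) = \tr(M^u_v \rho M^{u*}_v x)$. Then
\begin{eqnarray*}
M^u_v \rho M^{u*}_v &=& \sum_\ell \big(B^u_v \otimes |u\rangle\langle v|\big)\big(\rho_\ell \otimes |\ell\rangle\langle\ell|\big)\big(B^{u*}_v \otimes |v\rangle\langle u|\big)\\
&=& \sum_\ell B^u_v \rho_\ell B^{u*}_v \otimes |u\rangle\langle v|\ell\rangle\langle\ell|v\rangle\langle u|\\
&=& B^u_v \rho_v B^{u*}_v \otimes |u\rangle\langle u|,
\end{eqnarray*}
where only the $\ell = v$ term survives because $\langle v|\ell\rangle\langle\ell|v\rangle = \delta_{\ell v}$. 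Taking the trace against $x$ yields exactly $\tr\big(B^u_v \rho_v B^{u*}_v \otimes |u\rangle\langle u|\, x\big)$, which is (i).

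For part (ii), the strategy is identical but with the single-block operator $\rho_v \otimes |v\rangle\langle v|$ in place of the full $\rho$, and with a general transition $M^i_j$. Using cyclicity, I would compute
\begin{eqnarray*}
M^i_j\big(\rho_v \otimes |v\rangle\langle v|\big)M^{i*}_j &=& \big(B^i_j \otimes |i\rangle\langle j|\big)\big(\rho_v \otimes |v\rangle\langle v|\big)\big(B^{i*}_j \otimes |j\rangle\langle i|\big)\\
&=& B^i_j \rho_v B^{i*}_j \otimes |i\rangle\langle j|v\rangle\langle v|j\rangle\langle i|\\
&=& B^i_j \rho_v B^{i*}_j \otimes |i\rangle\langle i|\,\delta_{vj},
\end{eqnarray*}
since $\langle j|v\rangle\langle v|j\rangle = \delta_{jv}$. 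Pairing this with $x$ under the trace gives $\tr\big(B^i_j \rho_v B^{i*}_j \otimes |i\rangle\langle i|\, x\big)\delta_{vj}$, which is the claimed identity. The Kronecker delta $\delta_{vj}$ is the key point here, encoding that the single site $v$ can only be reached from itself under the $\langle v|\cdot|v\rangle$ matrix element; this is exactly the feature that will later collapse the double sum over $j$ in the proof of Theorem \ref{QMC1}.

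The only mild care needed is to keep straight which Hilbert-space factor each operator acts on when multiplying tensor products, and to remember that the trace on $\ch \otimes \ck$ factorizes so that the $\ck$-part contributes $\langle i|i\rangle = 1$ after the projections collapse. Neither part requires the normalization constants or the constraint \eqref{Bij}; those enter only in the subsequent application of the lemma.
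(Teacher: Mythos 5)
Your proposal is correct and follows essentially the same route as the paper: expand $\rho$ into its blocks, use cyclicity of the trace to sandwich $\rho$ between $M^u_v$ and $M^{u*}_v$, and let the orthonormality $\langle v|\ell\rangle\langle\ell|v\rangle=\delta_{\ell v}$ collapse the sum (the paper treats (i) exactly this way and dismisses (ii) as analogous, which you simply spell out). No gaps; your closing remark that neither the normalization constants nor \eqref{Bij} is needed here is also accurate.
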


\begin{proof} (i). We have
\begin{eqnarray*}
\tr(\rho M^{u*}_{v}x M^u_{v})&=&\sum_\ell\tr(\rho_\ell \o
|\ell\rangle\langle \ell|M^{u*}_{v}x M^u_{v})\\[2mm]
&=&\sum_\ell\tr\big(B^u_v\o |u\rangle\langle v| (\rho_\ell \o
|\ell\rangle\langle \ell|)(B^{u*}_{v}\o |v\rangle\langle u| x\big)\\[2mm]
&=&\tr(B^u_{v}\rho_v B^{u*}_{v}\o |u\rangle\langle u|x).
\end{eqnarray*}

The equality (ii) can be proven by the same way.
\end{proof}

\begin{proof}[Proof of Proposition \ref{QMC-ext}] The extension of $\f$ on $\ca_\bz$ is defined by
\eqref{2.5-1}. It is compatible, if the condition \eqref{KK2} is
satisfied. Therefore, we check \eqref{KK2} for \eqref{AK2}. Indeed,
we have
\begin{eqnarray}\label{AK4}
{\tr}^{(1)}\bigg(\sum_{i,j}K_{ij}^*(\rho\o\id)K_{ij}\bigg)&=&{\tr}^{(1)}\bigg(\sum_{i,j}M^i_{j}\o A_{ij}^*(\rho\o\id)M^{i*}_{j}\o A_{ij}\bigg)\nonumber\\[2mm]
&=&\sum_{i,j}{\tr}^{(1)}\left[\big(M^i_{j}\rho M^{i*}_{j}\big)\o
A_{ij}^*A_{ij}\right]\nonumber\\[2mm]
&=&\sum_{i,j}{\tr}\big(M^i_{j}\rho
M^{i*}_{j}\big)\frac{\rho_j\o|j\rangle\langle j|}{\tr(\rho_j)}.
\end{eqnarray}

Now using
\begin{eqnarray}\label{AK5}
{\tr}\big(M^i_{j}\rho M^{i*}_{j}\big)&=&{\tr}\big(B^i_{j}\o
|i\rangle\langle j|\rho B^{i*}_{j}|j\rangle\langle
i|\big)\nonumber\\[2mm]
&=&\sum_\ell{\tr}\big(B^i_{j}\o |i\rangle\langle
j|\rho_\ell\o|\ell\rangle\langle \ell| B^{i*}_{j}|j\rangle\langle
i|\big)\nonumber\\[2mm]
&=&{\tr}\big(B^i_{j}\rho_jB^{i*}_{j}|i\rangle\langle
i|\big)\nonumber\\[2mm]
&=&{\tr}\big(B^i_{j}\rho_jB^{i*}_{j}\big)\nonumber\\[2mm]
&=&{\tr}\big(B^{i*}_{j}B^i_{j}\rho_j\big)
\end{eqnarray}
from \eqref{AK5} one finds
\begin{eqnarray}\label{AK6}
{\tr}^{(1)}\bigg(\sum_{i,j}K_{ij}^*(\rho\o\id)K_{ij}\bigg)&=&\sum_{i,j}{\tr}\big(B^{i*}_{j}B^i_{j}\rho_j\big)\frac{\rho_j\o|j\rangle\langle
j|}{\tr(\rho_j)} \nonumber\\[2mm]
&=&\sum_{j}{\tr}\bigg(\sum_iB^{i*}_{j}B^i_{j}\rho_j\bigg)\frac{\rho_j\o|j\rangle\langle
j|}{\tr(\rho_j)}\nonumber\\[2mm]
&=&\sum_{j}{\tr}(\rho_j)\frac{\rho_j\o|j\rangle\langle
j|}{\tr(\rho_j)}\nonumber\\[2mm]
&=&\sum_{j}\rho_j\o|j\rangle\langle
j|\nonumber\\[2mm]
&=&\rho.
\end{eqnarray}
Hence, the above defined QMC can be extended to $\ca_\bz$.

To prove the equality \eqref{AK2}, it is enough to prove for the
case $n=2$ since the general formula can be proved by induction.
From \eqref{AK7} and using Lemma \ref{aux} one finds
\begin{eqnarray*}
\f(x_1\o x_2)&=&\tr(\rho\ce(x_1\o\ce(x_2\o\id)))\\[2mm]
&=&\tr\left(\rho\ce(x_1\o\bigg(\sum_{i,j}M^{i*}_{j}x_2
M^i_{j}\bigg)\right)\\[2mm]
&=&\sum_{i,j}\tr\left(\rho\ce(x_1\o M^{i*}_{j}x_2
M^i_{j})\right)\\[2mm]
&=&\sum_{i,j}\tr\left(\rho\bigg(\sum_{u,v}M^{u*}_{v}x_1
M^u_{v}\bigg)\right)\frac{\tr(\rho_v\o |v\rangle\langle v|
M^{i*}_{j}x_2
M^i_{j})}{\tr(\rho_v)}\\[2mm]
&=&\sum_{u,v}\tr\big(\rho M^{u*}_{v}x_1
M^u_{v}\big)\sum_{i,j}\frac{\tr(\rho_v\o |v\rangle\langle v|
M^{i*}_{j}x_2
M^i_{j})}{\tr(\rho_v)}\\[2mm]
&=&\sum_{u,v}\tr\big(B^u_{v}\rho_v B^{u*}_{v}\o |u\rangle\langle
u|x_1\big)\sum_{i}\frac{\tr\big(B^i_{v}\rho_v B^{i*}_{v}\o
|i\rangle\langle
i|x_2\big)}{\tr(\rho_v)}\\[2mm]
&=&\sum_{v}\tr(\rho_v)\psi_v(x_1)\psi_v(x_2)
\end{eqnarray*}
This completes the proof.
\end{proof}

Using the same idea of the proof we can get the following.

\begin{cor}\label{Eee} For any projection $e\in B(\ch\o\ck^\rho)$ one has
$$
\ce(e\o E_{0]}(\tau^n_\infty))=\sum_{u,v}M^{u*}_{v}e
M^u_{v}\big(\psi_v(e^{\perp})\big)^{n+1}.
$$
\end{cor}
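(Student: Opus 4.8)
The plan is to unfold $E_{0]}(\tau^n_\infty)$ into an $(n+1)$-fold nested application of the transition expectation and then track how the scalar weight appearing in \eqref{AK7} multiplies at each layer. First I would write, using \eqref{CE1} with left endpoint $0$,
$$
E_{0]}(\tau^n_\infty) = \ce\big(e^\perp \o \ce\big(e^\perp \o \cdots \o \ce(e^\perp \o \id)\cdots\big)\big),
$$
an $(n+1)$-fold composition, since $\tau^n_\infty = J_0(e^\perp)\cdots J_n(e^\perp)$ carries $e^\perp$ in the slots $0,\dots,n$. Writing $y_n := E_{0]}(\tau^n_\infty)$, this reads $y_n = \ce(e^\perp \o y_{n-1})$ with $y_0 = \ce(e^\perp \o \id)$. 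The whole argument rests on isolating the scalar
$$
c_v(y) := \frac{\tr(\rho_v \o |v\rangle\langle v|\, y)}{\tr(\rho_v)}, \qquad v \in I^c_\rho,
$$
through which the second argument enters \eqref{AK7}, so that $\ce(x \o y) = \sum_{u,v} M^{u*}_v x M^u_v\, c_v(y)$.

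The key step is the one-line recursion $c_v\big(\ce(e^\perp \o z)\big) = \psi_v(e^\perp)\, c_v(z)$. To obtain it I would expand $\ce(e^\perp \o z) = \sum_{i,j} M^{i*}_j e^\perp M^i_j\, c_j(z)$ and substitute into the definition of $c_v$; Lemma \ref{aux}(ii) then gives
$$
\tr\big(\rho_v \o |v\rangle\langle v|\, M^{i*}_j e^\perp M^i_j\big) = \tr\big(B^i_j \rho_v B^{i*}_j \o |i\rangle\langle i|\, e^\perp\big)\,\d_{vj},
$$
so the factor $\d_{vj}$ collapses the $j$-sum to $j=v$, and the remaining $i$-sum is exactly $\tr(\rho_v)\,\psi_v(e^\perp)$ by the defining formula \eqref{AK9}. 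This is the crucial decoupling: the diagonal factor $|v\rangle\langle v|$ against $M^i_j$ annihilates every off-diagonal transition $j \neq v$, which is precisely what makes the weight pick up a single scalar $\psi_v(e^\perp)$ at each layer instead of mixing distinct sites.

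With the recursion in hand the remainder is bookkeeping. The base case is $c_v(\id) = \tr(\rho_v)/\tr(\rho_v) = 1$, whence $c_v(y_0) = \psi_v(e^\perp)$; iterating $c_v(y_n) = \psi_v(e^\perp)\, c_v(y_{n-1})$ yields $c_v\big(E_{0]}(\tau^n_\infty)\big) = \big(\psi_v(e^\perp)\big)^{n+1}$ by induction on $n$. Finally I would feed this through $\ce(e \o y) = \sum_{u,v} M^{u*}_v e M^u_v\, c_v(y)$ to read off
$$
\ce\big(e \o E_{0]}(\tau^n_\infty)\big) = \sum_{u,v} M^{u*}_v e M^u_v\,\big(\psi_v(e^\perp)\big)^{n+1},
$$
as claimed. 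The one genuine obstacle is establishing and correctly applying the recursion of the second paragraph: one must check that the outer label $v$ of the partial trace against $\rho_v \o |v\rangle\langle v|$ is forced to coincide with the inner summation label $j$ via the $\d_{vj}$ of Lemma \ref{aux}(ii), and that the surviving $i$-sum reproduces \eqref{AK9} verbatim. The unfolding of $E_{0]}$ and the induction are then routine.
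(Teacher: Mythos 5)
Your proof is correct and takes essentially the same route as the paper, which proves the corollary by ``the same idea'' as Theorem \ref{QMC1}: expand the nested transition expectations via \eqref{AK7} and use the Kronecker delta in Lemma \ref{aux}(ii) to collapse the site sum, producing one factor of $\psi_v(e^{\perp})$ per layer. Your recursion $c_v\big(\ce(e^{\perp}\o z)\big)=\psi_v(e^{\perp})\,c_v(z)$, together with the unfolding of $E_{0]}(\tau^n_\infty)$ through \eqref{CE1} and the base case $c_v(\id)=1$, is just a clean inductive packaging of exactly that computation.
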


\begin{proof}[Proof of Theorem \ref{inv11}] It is enough to check the equality \eqref{qqq1} for the pair $(\rho,\ce^t)$. From \eqref{AK72} we have
$$
\f_0(\ce^t(\id\o x))=\tr(\rho\cm^*(x))=\tr(\cm(\rho)x)=\tr(\rho
x)=\f_0(x).
$$
Correspondingly, the equality
\begin{eqnarray}\label{inv12}
\f^t_{\r}(x_1\o x_2\o\cdots\o
x_n)=\tr(\rho\ce^t(x_1\o\ce^t(x_2\o\cdots\o\ce^t(x_n\o\id))\cdots))
\end{eqnarray}
defines a QMC on $\ca_\bz$.
\end{proof}

To prove Theorem \ref{QMC2} we need the following auxiliary fact.

\begin{lem}\label{EF0}
One has
\begin{eqnarray}\label{EF1}
\ce^t(x_1\o\ce^t(x_2\o\cdots\o\ce^t(x_n\o\id))\cdots)&=&\sum_{i_1,i_2,\dots,i_n}\big(B^{i_2*}_{i_1}
B^{i_3*}_{i_2}\cdots B^{i_n*}_{i_{n-1}}B^{i_n}_{i_{n-1}}\cdots
B^{i_3}_{i_2}B^{i_2}_{i_1}\o |i_1\rangle\langle
i_1|\big)\nonumber\\[2mm]
&&\times\f_{i_1}(x_1)\cdots\f_{i_n}(x_n),
\end{eqnarray}
where
\begin{eqnarray*}
\f_k(x)=\frac{\tr(\rho_k\o  |k\rangle\langle k|x)}{\tr(\rho_k)}.
\end{eqnarray*}
\end{lem}

\begin{proof} Let us prove for $n=2$. Then from
\eqref{AK72} with \eqref{EF2} we obtain
\begin{eqnarray*}
\ce^t(x_1\o\ce^t(x_2\o\id))&=&\ce^t\left(x_1\o\bigg(\sum_{i,j}M_j^{i*}M^i_j\f_j(x_2)\bigg)\right)\\[2mm]
&=&\sum_{i,j}\ce^t\left(x_1\o(B_j^{i*}B^i_j\o |j\rangle\langle j|)\right)\f_j(x_2)\\[2mm]
&=&\sum_{j}\ce^t\left(x_1\o(\id\o |j\rangle\langle j|)\right)\f_j(x_2)\\[2mm]
&=&\sum_{j}\sum_{u,v}M_v^{u*}(\id\o |j\rangle\langle j|)M_v^{u*}\f_v(x_1)\f_j(x_2)\\[2mm]
&=&\sum_{j,v}(B_v^{j*}B_v^{j*}\o |v\rangle\langle
v|)\f_v(x_1)\f_j(x_2)
\end{eqnarray*}
which shows that \eqref{EF1} is true at $n=2$. General setting can
be proved by the same argument.
\end{proof}

\begin{proof}[Proof of Theorem \ref{QMC2}]  Due to the density argument, it is enough to prove the assertion for local elements of $\ca$.
Namely, one has
\begin{eqnarray}\label{QMC21}
\f^t_\r(x_0\o x_1\o\cdots\o
x_n)&=&\sum_{i_0,i_1,\dots,i_n}\tr\left(B^{i_n}_{i_{n-1}}\cdots
B^{i_2}_{i_1}
B^{i_1}_{i_0}\rho_{i_0}B^{i_1*}_{i_0}B^{i_2*}_{i_1}\cdots
B^{i_n}_{i_{n-1}}\right)\nonumber\\[2mm]&&\times\f_{i_0}(x_0)\cdots\f_{i_n}(x_n).
\end{eqnarray}
The last one immediately follows from Lemma \ref{EF0}.
\end{proof}

\begin{proof}[Proof of Theorem \ref{QMC22}]  (i)$\Rightarrow$ (ii).
Assume that the map $\cm^*$ is weak mixing, i.e. for every state
$\kappa$ one has $\cm^n\kappa\to\r$ weakly.

Due to the density argument, it is enough to prove the statement for
local elements $x,y\in\ca_{loc}$, i.e.
$$
x=x_{i_0}\o\cdots \o x_{i_m}, \ \ y=y_{j_0}\o\cdots\o y_{j_\ell}
$$
Then due to equality \eqref{inv12} one finds
\begin{eqnarray*}
\f^t_\r(x\a^{n}(y))&=&\tr(\r\ce^t(x_1\o\ce^t(x_2\o\cdots
\ce^t(x_m\o(\cm^*)^n(\ce^t(y_1\o\ce^t(y_2\o\cdots
\ce^t(y_\ell\o\id))))\\[2mm]
&\to&\f^t_\r(x)\f^t_\r(y) \ \ \textrm{as} \ \ n\to\infty
\end{eqnarray*}
which yields the assertion. The reverse implication
(ii)$\Rightarrow$(i) follows from the last relation and $\ce^t(B(\ch)\o
B(\ck)\o\id)=B(\ch)\o B(\ck)$.

The implication (ii)$\Rightarrow$ (iii) immediately follows from the
equality \eqref{QMC23}. Therefore, we consider (iii)$\Rightarrow$
(ii).
 Assume that the measure $\m_\r$ is weak mixing. It is enough to prove the statement for local
elements $x,y\in\ca_{loc}$, i.e.
$$
x=x_{i_0}\o\cdots \o x_{i_m}, \ \ y=y_{j_0}\o\cdots\o y_{j_\ell}
$$
Then due to \eqref{QMC21} we have
\begin{eqnarray*}
\f^t_\r(x\a^{n}(y))=\sum_{i_0,\dots,i_n\atop
j_0,\dots,j_\ell}\m_\r(A^{[0,m]}(i_0,\dots,i_m))\cap\s^{-n}(A^{[0,\ell]}(j_0,\dots,j_\ell))
\f_{i_0,\dots,i_m}(x)\f_{j_0,\dots,j_\ell}(y)
\end{eqnarray*}
Now using the weak mixing property of $\m_\r$ one finds
\begin{eqnarray*}
&&\sum_{i_0,\dots,i_n\atop
j_0,\dots,j_\ell}\m_\r(A^{[0,m]}(i_0,\dots,i_m))\cap\s^{-n}(A^{[0,\ell]}(j_0,\dots,j_\ell))
\f_{i_0,\dots,i_m}(x)\f_{j_0,\dots,j_\ell}(y)\\[2mm]
&\to &\sum_{i_0,\dots,i_n\atop
j_0,\dots,j_\ell}\m_\r(A^{[0,m]}(i_0,\dots,i_m)))\m_\r(A^{[0,\ell]}(j_0,\dots,j_\ell))
\f_{i_0,\dots,i_m}(x)\f_{j_0,\dots,j_\ell}(y)\\[2mm]
&=&\bigg(\sum_{i_0,\dots,i_n}\m_\r(A^{[0,m]}(i_0,\dots,i_m))
\f_{i_0,\dots,i_m}(x)\bigg) \bigg(\sum_{j_0,\dots,j_\ell}
\m_\r(A^{[0,\ell]}(j_0,\dots,j_\ell))\f_{j_0,\dots,j_\ell}(y)\bigg)\\[2mm]
&=&\f^t_\r(x)\f^t_\r(y) \ \ \textrm{as} \ \ n\to\infty,
\end{eqnarray*}
 which yields the weak
mixing property of $\f^t_\r$. The ergodicity can be proved by the
same argument.
 This completes the proof.
\end{proof}

\section{Proofs for Section 5}

\begin{proof}[Proof of Theorem \ref{GG1}] (i) Let $\f(J_n(e))=0$ for every $n\in\bn$. For any $k,m\in\bn$ we have
$$\id_{m]}\o\t_k\leq
\id_{m+k-1]}\o J_{m+k}(e)\o\id_{[m+k+1},$$ therefore, one finds
$\f(\beta^m(\t_k))\leq\f(J_{m+k}(e))=0$. Hence, from \eqref{tau1}
one gets $\f(\b^m(\t_\infty))=1$.

Now assume that $\f(\b^m(\t_\infty))=1$ for any $m\in\bn$. Then again
from \eqref{tau1} we obtain
$$
\f\bigg(\b^m\bigg(\sum_{k\geq 0}\t_k\bigg)\bigg)=0,
$$
which implies $\f(\b^m(\t_k))=0$ for all $k\in\bn$. This means
$\f(\b^m(\t_0))=\f(J_m(e))=0$.

(ii) Let $e$ be $\f$-recurrent. Then from the definition and
\eqref{tau1} one finds
$$
\f(J_0(e))=\f\bigg(J_0(e)\o\sum_{k\geq
0}\t_k\bigg)=\f(J_0(e))-\f(J_0(e)\o\t_\infty),
$$
which means $\f(J_0(e)\o\t_\infty)=0$. The reverse implication is
obvious.

(iii) If $\f$ is faithful, then the $\f$-completely accessibility of
$e$ is equivalent to $\t_\infty=0$, then from (ii) we have that $e$
is $\f$-recurrent. Conversely, if $e$ is $\f$-recurrent, then due to
the faithfulness of $\f$ with (ii) one gets $J_0(e)\o\t_\infty=0$,
so $\t_\infty=0$ which means that $e$ is $\f$-completely
accessibility.

(iv) Assume that $e$ is not $\f$-completely accessible, this means
$\f(\t_\infty)>0$. Due to the $\f$-recurrence one has
$\f(J_0(e)\o\t_\infty)=0$, which implies that
$$
\lim_{n\to\infty}\f(J_0(e)\o\t^n_\infty)=0.
$$
The last equality yields that
$$
\lim_{n\to\infty}\f(J_0(e)\o J_1(e)\o\t^n_\infty)=0, \ \ \
\lim_{n\to\infty}\f(J_0(e)\o J_1(e^{\perp})\o\t^n_\infty)=0,
$$
so
$$
\lim_{n\to\infty}\f(J_0(e)\o\id\o\t^n_\infty)=0.
$$
Hence, iterating the last equality, for every $k\in\bn$ we have
\begin{equation}\label{tau2}
\f(J_0(e)\o\id_{k-1]}\o\t_\infty)=0.
\end{equation}
Since $\f(\t_\infty)>0$, then one can find a projection $p\in M_0$
($p\neq 0$) such that $\t_\infty\geq \l p$ for some positive number
$\l$. Then from \eqref{tau2} we infer that
$$
\f(J_0(e)\o\id_{k-1]}\o
p)\leq\frac{1}{\l}\f(J_0(e)\o\id_{k-1]}\o\t_\infty)=0
$$
this implies that $e$ and $p$ are not $\f$-communicate, which is a
contradiction. This completes the proof.
\end{proof}

\begin{proof}[Proof of Corollary \ref{GG2}] Since, $\f$ is a QMC on $\ca_\bz$, then it is a translation invariant state. Therefore, the statement (i) immediately
follows from (i) of Theorem \ref{GG1}. To establish (ii) again due to
the translation invariance of $\f$ we obtain
$$
\f(\t_\infty)=\f(\t_\infty)-\f(e\o\t_\infty)
$$
which by (ii) Theorem \ref{GG1} yields the assertion.

(iii) The faithfulness of $\f$ implies that $e$ is $\f$-completely
accessible iff $\t_\infty=0$, which yields that $e$ is
$\ce$-completely accessible. The reverse implication is obvious.\\
\end{proof}

\begin{proof}[Proof of Theorem \ref{1fi-rec}]
(i) Let us consider the $\f^t_{p,k}$-recurrence of a projection
$e_{Q,k}=Q\o|k\rangle\langle k|$. From \eqref{QMC21},\eqref{EF2}
and Remark \ref{rho-k}, we obtain
\begin{eqnarray}\label{p-rec}
\f^t_{p,k}(e_{Q,k}\o \t_n)&=&\f^t_{p,k}(e_{Q,k}\o
\underbrace{e_{Q,k}^{\perp}\o\cdots \o
e_{Q,k}^{\perp}}_n\o e_{Q,k})\nonumber\\[2mm]
&=&\sum_{i_i,\dots,i_{n}}\bp_{p,k}(k,i_1,\dots,i_n,k)\f_{k}(e_{Q,k})^2\f_{i_1}(e_{Q,k}^\perp)\cdots\f_{i_n}(e_{Q,k}^\perp)\nonumber\\[2mm]
&=&\sum_{i_i,\dots,i_{n}}\bp_{p,k}(k,i_1,\dots,i_n,k)\tr(pQ)^2\big(1-\tr(pQ)\d_{i_i,k}\big)\cdots\big(1-\tr(pQ)\d_{i_n,k}\big)\nonumber\\[2mm]
&=&\sum_{i_i,\dots,i_{n}\atop i_\ell\neq k, 1\leq \ell\leq
n}\bp_{p,k}(k,i_1,\dots,i_n,k)\tr(pQ)^2+\big(1-\tr(pQ)\big)G,
\end{eqnarray}
where $G$ is some expression.

Due to $\tr(pQ)=1$, the last expression \eqref{p-rec} implies that
the projection $e_{Q,k}$ is $\f^t_{p,k}$-recurrent if and only if
$\bp_{p,k}(t_k<\infty)=1$.

(ii) Now assume that $\tr(pQ)<1$, and denote $\l=\tr(pQ)$. Clearly,
$\l\in(0,1)$. Then again from \eqref{QMC21},\eqref{EF2} we find
\begin{eqnarray*}\label{p-rec1}
\f^t_{p,k}(e_{Q,k}^\perp\o \t^n_\infty)&=&\f^t_{p,k}(e_{Q,k}^\perp\o
\underbrace{e_{Q,k}\o\cdots \o
e_{Q,k}}_{n+1})\nonumber\\[2mm]
&=&\sum_{\ell}\bp_{p,k}(\underbrace{\ell,k,\dots,k}_{n+2})\big(1-\l\d_{\ell,k})\l^{n+1}\nonumber\\[2mm]
&=&\bp_{p,k}(\underbrace{k,k,\dots,k}_{n+2})\big(1-\l)\l^{n+1}+\sum_{\ell\neq
k}\bp_{p,k}(\ell,k,\dots,k)\l^{n+1}\\[2mm]
&\leq&\big(1-\l)\l^{n+1}+\sum_{\ell\neq
k}\bp_{p,k}(\ell)\l^{n+1}\\[2mm]
&\leq&\big(1-\l)\l^{n+1}+(1-\bp_{p,k}(k))\l^{n+1}\to 0 \ \ \
\textrm{as} \ \ n\to\infty.
\end{eqnarray*}
Hence, according to Theorem \ref{GG1} (ii) from the last relation
one gets that the projection $\id-P\o|k\rangle\langle k|$ is
$\f^t_{p,k}$-recurrent. This completes the proof.
\end{proof}

\section*{Acknowledgments}
  A. Dhahri acknowledges support by Basic Science Research Program through the
National Research Foundation of Korea (NRF) funded by the Ministry
of Education (grant 2016R1C1B1010008).

\end{document}